\numberwithin{equation}{section}
\newcommand{\Z}{\mathbb{Z}}
\newcommand{\Q}{\mathbb{Q}}
\newcommand{\R}{\mathbb{R}}
\newcommand{\C}{\mathbb{C}}
\DeclareMathOperator{\Gal}{Gal}
\DeclareMathOperator{\im}{Im}
\DeclareMathOperator{\re}{Re}
\DeclareMathOperator{\SL}{SL}
\DeclareMathOperator{\sgn}{sgn}
\newtheorem{thm}{Theorem}[section]
\newtheorem*{thm*}{Theorem}
\newtheorem{lem}[thm]{Lemma}
\newtheorem{pro}[thm]{Proposition}
\newtheorem{cor}[thm]{Corollary}
\newtheorem*{cor*}{Corollary}
\newtheorem{algo}[thm]{Algorithm}
\theoremstyle{definition}
\theoremstyle{remark}
\begin{document}

\author[Zhengyu Tao]{Zhengyu Tao}


\address{Department of Mathematics, Nanjing University, Nanjing 210093, People's Republic of China}

\email{taozhy@smail.nju.edu.cn}

\author[Xuejun Guo]{Xuejun Guo$^{\ast}$}

\address{Department of Mathematics, Nanjing University, Nanjing 210093, People's Republic of China}

\email{guoxj@nju.edu.cn}

\title[CM points, class numbers, and Mahler measures]{CM points, class numbers, and the Mahler measures of $x^3+y^3+1-kxy$}

\keywords{Mahler measure; CM point; $L$-function; elliptic curve}

\subjclass[2020]{Primary 11R06, 11F67; Secondary 11Y40, 19F27}

\thanks{The authors are supported by NSFC 11971226 and NSFC 12231009}

\thanks{$^{\ast}$Corresponding author}

\begin{abstract}
We study the Mahler measures of the polynomial family $Q_k(x,y) = x^3+y^3+1-kxy$ using the method previously developed by the authors. An algorithm is implemented to search for CM points with class numbers $\leqslant 3$, we employ these points to derive interesting formulas that link the Mahler measures of $Q_k(x,y)$ to $L$-values of modular forms. As by-products, some conjectural identities of Samart are confirmed, one of them involves the modified Mahler measure $\tilde{n}(k)$ introduced by Samart recently. For $k=\sqrt[3]{729\pm405\sqrt{3}}$, we also prove an equality that expresses a $2\times 2$ determinant with entries the Mahler measures of $Q_k(x,y)$ as some multiple of the $L$-value of two isogenous elliptic curves over $\Q(\sqrt{3})$. 
\end{abstract}

\maketitle

\section{Introduction}

For any non-zero Laurent polynomial $P\in\C[x_1^{\pm1},\cdots,x_n^{\pm1}]$, the (logarithmic) Mahler measure of $P$ is defined by
\[m(P)=\frac{1}{(2\pi)^n}\int_{0}^{2\pi}\cdots\int_{0}^{2\pi}\log|P(e^{i\theta_1},\cdots,e^{i\theta_n})|d\theta_1\cdots d\theta_n.\]

Initiated by the insights of Deninger and Boyd, the relation between multivariate Mahler measures and special values of $L$-functions has attracted a significant amount of research. In \cite{Den97}, Deninger conjectured that 
\begin{equation}\label{Deningerconj}
	m\Bigl(x+\frac{1}{x}+y+\frac{1}{y}+1\Bigr) \overset{?}{=} L'(E,0),
\end{equation}
where $E$ is the conductor 15 elliptic curve defined by the projective closure of $x+\frac{1}{x}+y+\frac{1}{y}+1=0$. Later,  based on numerical experiments, Boyd \cite{Boy98} made similar conjectures of the form
\begin{equation}\label{Boydconj}
	m(P_k)\overset{?}{=}r_k L'(E_k,0)
\end{equation}
for many $k\in\Z-\{0,\pm4\}$, where $P_k(x,y)=x+\frac{1}{x}+y+\frac{1}{y}+k, r_k\in\Q$ and $E_k$ is the elliptic curve associated to $P_k(x,y)=0$. He also formulated analogous conjectures for many other families, among which is the family
\begin{equation}
	Q_k(x,y)=x^3+y^3+1-kxy.
\end{equation}
Note that $Q_k(x,y)=0$ is the Hesse pencil of elliptic curves with Weierstrass model
\begin{equation}\label{EllipticcurveCk}
	C_k: Y^2=X^3-27k^6X^2+216k^9(k^3-27)X-432k^{12}(k^3-27)^2.
\end{equation}
The rational transformation that converts $C_k$ to the curve $Q_k(x,y)=0$ is
\begin{equation}\label{rationaltransformation}
	X=\frac{12k^4(k^3-27)x}{kx+3y+3},\quad Y=\frac{108k^6(k^3-27)(y-1)}{kx+3y+3}.
\end{equation}

During the period when Boyd's work appeared, motivated by mirror symmetry in physics, Rodriguez Villegas \cite{RV98} represented $m(P_k)$ and $m(Q_k)$ as Kronecker-Eisenstein series. This led to the proof of \eqref{Boydconj} in some cases when the elliptic curves $E_k$ and $C_k$ \emph{have} complex multiplication (usually abbreviated as CM). Specifically, Rodriguez Villegas proved \eqref{Boydconj} for the polynomials $P_{4\sqrt{2}},P_{2\sqrt{2}}$ and $Q_{-6}$.

Since the curve $E$ in \eqref{Deningerconj} has \emph{no} CM, Rodriguez Villegas' method doesn't work in this case. The question mark in equation \eqref{Deningerconj} was finally removed by Rogers and Zudilin \cite{RZ14} nearly twenty years after Deninger made his conjecture. Their approach, now known as the Rogers-Zudilin method \cite[Chapter 9]{BZ20}, can be successfully applied to prove a number of non-CM cases of \eqref{Boydconj}. However, the use of Rogers-Zudilin method relies heavily on the modular unit parametrization of elliptic curves and Brunault \cite{Bru15} proved that there are only finitely many elliptic curves over $\Q$ that can be parametrized
by modular units. Interested readers can refer to the tables in \cite{Sam21, Sam23}, where the proven cases of \eqref{Boydconj} related to $m(P_k)$ and $m(Q_k)$ are listed (whether CM or non-CM).

Although much of the current literature focuses on the study of non-CM cases, we believe that there are still some veins to be mined in the CM cases. In our previous work \cite{TGW22}, we proved that when $\tau$ is a CM point (i.e., imaginary quadratic numbers in the upper half plane $\mathcal{H}=\{\tau\in\C|\im(\tau)>0\}$), the degree of $k=k(\tau)$ as an algebraic number in Rodriguez Villegas' formula that expresses $m(P_k)$ as Kronecker-Eisenstein series can be bounded by the class number of the CM point $\tau$. This fact, together with a systematic search for CM points with class numbers $\leqslant 2$ enabled us to prove over twenty identities of the form
\[m(P_k)=\frac{r_ks_k}{\pi^2}L(f_k,2),\]
where $r_k\in\Q, s_k\in\{1,\sqrt{2},\sqrt{3},\sqrt{7}\}$ and $f_k$ are weight two cusp forms of levels $28, 48, 56, 64, 112,$  $128, 192, 256$ and $448$. Guided by Beilinson's
conjecture, we also proved $5$ identities connecting $L$-values of CM elliptic curves over real quadratic fields to $2\times 2$ determinants with $m(P_k)$ as entries. These identities extend the recent work \cite{GJLQ24} of Guo, Ji, Liu and Qin, in which they dealt with the case when $k=4\pm4\sqrt{2}$. As an example, we provide here one of our results:
\begin{equation}\label{mahlerasdet}
	\left|\det{\begin{pmatrix}m(P_{12+8\sqrt{2}})& m(P_{12-8\sqrt{2}})\\
		m(P_{12-8\sqrt{2}})&m(P_{12+8\sqrt{2}})\end{pmatrix}}\right|=\frac{1024}{\pi^4}L(E_{12\pm8\sqrt{2}},2).
\end{equation}

The present paper is devoted to treating the polynomial family $Q_k(x,y)$. Since a change of variables shows that $m(Q_k)$ only depends on $k^3$ \cite[\S 14]{RV98}, in the rest of this paper, we will use the following notation introduced by Samart \cite{Sam14}:
\[m_3(t):=3m(Q_{\sqrt[3]{t}})=3m(x^3+y^3+1-\sqrt[3]{t}xy).\]

The main difference between $P_k(x,y)$ and $Q_k(x,y)$ is that the family $P_k(x,y)$ is reciprocal while $Q_k(x,y)$ is nonreciprocal, where recall that a multivariable Laurent polynomial $P(x_1,\cdots, x_n)$ is \emph{reciprocal} if 
\[\frac{P(x_1,\cdots,x_n)}{P(1/x_1,\cdots,1/x_n)}=x_1^{b_1}\cdots x_n^{b_n}\]
for some $b_1,\cdots,b_n\in\Z$ and \emph{nonreciprocal} otherwise. Let $\mathcal{K}_Q$ (resp. $\mathcal{K}_P$) be the set of $k\in\C$ such that $Q_k(x,y)$ (resp. $P_k(x,y)$) vanishes on $\mathbb{T}^2=\{(x,y)\in\C^2\mid|x|=|y|=1\}$. As explained in \cite{Boy98, Sam23}, $\mathcal{K}_P\subset \R$ since $P_k$ is reciprocal, in fact $\mathcal{K}_P =[-4,4]$. However, for the nonreciprocal $Q_k(x,y)$, the set $\mathcal{K}_Q$ has non-empty interior: it is the compact region consisting of a hypocycloid with vertices at $3,3e^{\frac{2\pi i}{3}},3e^{\frac{4\pi i}{3}}$ (see \cite{Sam23} for the picture). We can now state Rodriguez Villegas' formula for $m_3(t)$:
\begin{thm}[Rodriguez Villegas {\cite[\S 14]{RV98}}]\label{Villegasthm}
	Let $\mathcal{F}'\subset\mathcal{H}$ be the fundamental domain of the congruence subgroup $\Gamma_0(3)$ formed by the geodesic triangle of vertices $i\infty,0,(1+i/\sqrt{3})/2$ and its reflection along the imaginary axis. For any $\tau\in \mathcal{F}'$, if $\sqrt[3]{t(\tau)}\in \C-\mathcal{K}^\circ_Q$, where $t(\tau)=27+\bigl(\eta(\tau)/\eta(3\tau)\bigr)^{12}$, then we have
	\begin{equation}\label{Villegasformula}
		m_3(t(\tau))=\frac{81\sqrt{3}\im(\tau)}{4\pi^2}\underset{m,n\in\Z}{{\sum}'}\frac{\chi_{-3}(n)(3m\re(\tau)+n)}{\left|3m\tau+n\right|^4},
	\end{equation}
	where $\chi_{-3}(\cdot)=\left(\frac{-3}{\cdot}\right)$ and $\underset{m,n\in\Z}{\sum'}$ means that $(m,n)=(0,0)$ is excluded from the summation.
\end{thm}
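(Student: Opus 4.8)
The plan is to reduce the two-variable integral $m(Q_k)$ to a one-dimensional regulator integral along a cycle of the Hesse curve, and then to evaluate that integral through the theta-function parametrisation of the Hesse pencil together with Kronecker's second limit formula, following \cite[\S14]{RV98}. By continuity of both sides of \eqref{Villegasformula} we may assume $k:=\sqrt[3]{t(\tau)}\notin\mathcal{K}_Q$ (not merely $\notin\mathcal{K}^\circ_Q$). Viewing $Q_k(x,y)=x^3-kxy+(y^3+1)$ as a monic cubic in $x$, the condition $k\notin\mathcal{K}_Q$ says precisely that $Q_k(\cdot,y)$ has no root on $|x|=1$ whenever $|y|=1$, so the number of its roots in $|x|<1$ is constant on the connected set $\C\setminus\mathcal{K}_Q$, and equals $1$ there (as $|k|\to\infty$ two roots escape to $\infty$ and one tends to $0$). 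Writing $x_0(y)$ for the root in the open unit disk, Jensen's formula in $x$ — using $x_0x_1x_2=-(y^3+1)$ and $m(y^3+1)=0$ — gives
\[m_3(t)=3m(Q_k)=-\frac{3}{2\pi}\int_0^{2\pi}\log\bigl|x_0(e^{i\theta})\bigr|\,d\theta=-\frac{3}{2\pi}\int_\gamma\eta(x,y),\]
where $\gamma$ is the $1$-cycle $\theta\mapsto\bigl(x_0(e^{i\theta}),e^{i\theta}\bigr)$ on $C_k(\C)$, on which $|y|\equiv 1$, and $\eta(x,y)=\log|x|\,d\arg y-\log|y|\,d\arg x$ is the regulator $1$-form (its second term vanishing along $\gamma$).

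Now uniformise $C_k(\C)=\C/L$ by the classical theta parametrisation of the Hesse cubic: for a lattice $L$ matched to $\Gamma_0(3)$ and the Hauptmodul $t(\tau)=27+\bigl(\eta(\tau)/\eta(3\tau)\bigr)^{12}$, the coordinates $x,y$ are, up to constants, ratios of $\theta_1(\cdot\mid\tau)$ with arguments shifted by $3$-torsion points, so that $\dv(x)$ and $\dv(y)$ are supported on the $3$-torsion, $\eta(x,y)$ is closed off those points, and $\gamma$ becomes a fixed loop in $\C/L$ (after moving $\tau$ by $\Gamma_0(3)$ into $\mathcal{F}'$, a translate of the segment $[0,1]$). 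Feeding in Kronecker's second limit formula — the doubly-periodic Green's function $\log|\theta_1(z\mid\tau)|-\pi(\im z)^2/\im(\tau)$ has, up to constants, the lattice-sum expansion $\sum_{0\ne\lambda\in L^{\vee}}|\lambda|^{-2}e^{2\pi i\langle\lambda,z\rangle}$ — and integrating $\log|x|$ against $d\arg y$ term by term along $\gamma$ pairs the two Fourier series, raising the exponent to produce $\sum_{0\ne\lambda\in L^{\vee}}\lambda\,|\lambda|^{-4}e^{\cdots}$; the $3$-torsion shifts turn, via elementary Gauss sums, into the twist by $\chi_{-3}$, and tracking the normalising constants collapses everything into the right-hand side of \eqref{Villegasformula}. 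The one remaining scalar is pinned down at the cusp $\tau\to i\infty$: from $m_3(t)=\log|t|+O(|t|^{-1})$ as $t\to\infty$ together with $t(\tau)=e^{-2\pi i\tau}+O(1)$ one gets $m_3(t(\tau))=2\pi\,\im(\tau)+o(1)$, while on the right only the terms with $m=0$ survive to leading order and contribute $\tfrac{81\sqrt3}{2\pi^2}L(\chi_{-3},3)\,\im(\tau)=2\pi\,\im(\tau)$, since $L(\chi_{-3},3)=\tfrac{4\pi^3}{81\sqrt3}$.

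The substantive difficulty is entirely in the second paragraph: making the theta-parametrisation of the Hesse pencil over $X_0(3)$ fully explicit (the lattice, the $3$-torsion points carrying $\dv(x)$ and $\dv(y)$, the homology class of $\gamma$) and pushing the Fourier/Kronecker computation through with every constant in place — this is what occupies \cite[\S14]{RV98}, and the cube $t\mapsto k$ with the attendant $3$-isogeny structure makes the bookkeeping delicate. A partial shortcut recovers only the derivative of \eqref{Villegasformula}: since $\partial_kQ_k=-xy$, a residue evaluation of the inner integral yields $\partial_k m(Q_k)=\tfrac{1}{4\pi i}\int_\gamma dx/\partial_y Q_k$, a period of the invariant differential that is holomorphic in $k$ on $\C\setminus\mathcal{K}_Q$; one then checks that $\partial_\tau$ of the right-hand side of \eqref{Villegasformula} agrees there with this period, integrates, and fixes the constant at $i\infty$ as above — but because $m_3(t(\tau))$ need not continue real-analytically across $\{\sqrt[3]{t(\tau)}\in\partial\mathcal{K}_Q\}$, converting this into a complete proof still leans on the uniformisation.
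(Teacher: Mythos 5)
The paper does not prove this statement: it is quoted from Rodriguez Villegas \cite[\S 14]{RV98}, so there is no internal argument to compare yours against. Your first paragraph is correct and is the standard reduction: for $k\notin\mathcal{K}_Q$ the cubic $Q_k(\cdot,y)$ has exactly one root $x_0(y)$ in $|x|<1$ for every $|y|=1$ (constancy of the root count on the connected set $\C\setminus\mathcal{K}_Q$ plus the large-$|k|$ count), and Jensen's formula together with $x_0x_1x_2=-(y^3+1)$ and $m(y^3+1)=0$ gives $m_3(t)=-\tfrac{3}{2\pi}\int_\gamma\eta(x,y)$. Your cusp normalisation is also right: $L(\chi_{-3},3)=4\pi^3/(81\sqrt{3})$ makes the $m=0$ terms of the right-hand side contribute $2\pi\im(\tau)$, matching $\log|t(\tau)|$, while the $m\neq 0$ terms are $O(1/\im(\tau))$.

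The gap is the one you name yourself: the entire content of the theorem --- that this particular regulator integral equals $\tfrac{81\sqrt{3}\,\im(\tau)}{4\pi^2}$ times the stated twisted lattice sum, with that constant and that character --- sits in your second paragraph, which describes a strategy (theta parametrisation of the Hesse pencil, divisors supported on $3$-torsion, Kronecker's second limit formula, term-by-term Fourier pairing, Gauss sums) without carrying any of it out. In particular the homology class of $\gamma$ in $\C/L$, which is precisely where the hypothesis $\sqrt[3]{t(\tau)}\in\C-\mathcal{K}_Q^\circ$ enters (the class of the Deninger cycle can jump as $k$ crosses $\partial\mathcal{K}_Q$), is asserted rather than computed. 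As a description of how \cite{RV98} proceeds your sketch is essentially faithful --- though Villegas's own derivation is closer to your ``partial shortcut'': differentiate in $k$, recognise the resulting period as a weight-$3$ Eisenstein series for $\Gamma_0(3)$ with character, integrate the $q$-expansion, match it against the Fourier expansion of the Kronecker--Eisenstein series, and fix the constant at $i\infty$. But as a standalone proof your text is an outline with the decisive computation missing, which is presumably why the present paper simply cites the result rather than reproving it.
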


It is known that $t(\tau)$ is a Hauptmodul for $\Gamma_0(3)$, i.e., a generator of the function field of the modular curve $X_0(3)$. Let $\tau\in\mathcal{H}$ be a CM point. As mentioned earlier, this means that $\tau$ is an imaginary quadratic number in $\mathcal H$. Thus, there must exist three uniquely determined integers $a,b,c$ with $a>0, \gcd(a,b,c)=1$ such that $a\tau^2+b\tau+c=0$. In this paper, we will simply write $\tau$ as $[a,b,c]$. Recall that for any negative integer $D$ with $D\equiv 0$ or $1\mod 4$, the class number
\[h(D)=\#\{\text{primitive binary quadratic forms with discriminant } D\}/\sim,\]
where ``$\sim$'' is the equivalence relation that identifies equivalent quadratic forms as the same. As a slight abuse of notation, we define the \emph{class number} $h(\tau)$ of $\tau$ to be the class number of $b^2-4ac$, the discriminant of $\tau$. According to the theory of complex multiplication, $t(\tau)$ are algebraic numbers if $\tau$ takes CM points. Moreover, the algebraic degree of $t(\tau)$ can be bounded by $h(\tau)h(3\tau)$ (see Theorem \ref{boundoft}).

In \cite{Sam14}, Samart proved a number of formulas that express the Mahler measures of certain polynomials in two or three variables in terms of linear combinations of $L$-values of multiple modular forms. For the family $Q_k(x,y)$, he proved that
\begin{equation} \label{Samartresult}
	m_3\bigl(6-6\sqrt[3]{2}+18\sqrt[3]{4}\bigr)=\frac{3}{2}\bigl(L'(f_{108},0)+L'(f_{36},0)-3L'(f_{27},0)\bigr),
\end{equation}
where $f_{27}(\tau)=\eta(3\tau)^2\eta(9\tau)^2\in\mathcal{S}_2(\Gamma_0(27)), f_{36}(\tau)=\eta(6\tau)^4\in\mathcal{S}_2(\Gamma_0(36))$ and
\begin{equation*}
	\begin{split}
		f_{108}(\tau) & = \sum_{\substack{m,n\in\Z\\m\equiv\pm1,\pm2,n\equiv5\\(\mathrm{mod}\ 6)}}(4m+3n)q^{4m^2+6mn+3n^2}\\
		& =q+5q^7-7q^{13}-q^{19}-5q^{25}-4q^{31}-q^{37}+8q^{43}+\cdots
	\end{split}
\end{equation*}
is the unique normalized newform in $\mathcal{S}_2(\Gamma_0(108)).$ Since the elliptic curve $C_k$ has CM when $k=\sqrt[3]{6-6\sqrt[3]{2}+18\sqrt[3]{4}}$ (this can be easily checked by using the \textsf{SageMath} command \texttt{has\_cm()}), Theorem \ref{Villegasthm} should be able to resolve \eqref{Samartresult}. Indeed, Samart proved \eqref{Samartresult} by taking $\tau=\frac{i\sqrt{3}}{9}$ in \eqref{Villegasformula}. Since $\frac{i\sqrt{3}}{9}$ satisfies $27\tau^2+1$, we have $h\bigl(\frac{i\sqrt{3}}{9}\bigr)=h(-108)=3$. Based on some numerical values of the hypergeometric representation
of $m_3(t)$, Samart also discovered the following conjectural identities:
\begin{equation}\label{Samartconj1}
	m_3\bigl(17766+14094\sqrt[3]{2}+11178\sqrt[3]{4}\bigr)\overset{?}{=}\frac{3}{2}\bigl(L'(f_{108},0)+3L'(f_{36},0)+3L'(f_{27},0)\bigr),
\end{equation}
\begin{equation}\label{Samartconj2}
	m_3(\alpha\pm i\beta )\overset{?}{=}\frac{3}{2}\bigl(L'(f_{108},0)+3L'(f_{36},0)-6L'(f_{27},0)\bigr),
\end{equation}
\begin{equation}\label{Samartconj3}
	m_3\biggl(\frac{(7+\sqrt{5})^3}{4}\biggr)\overset{?}{=}\frac{1}{8}\bigl(9L'(f_{100},0)+38L'(f_{20},0)\bigr),
\end{equation}
\begin{equation}\label{Samartconj4}
	m_3\biggl(\frac{(7-\sqrt{5})^3}{4}\biggr)\overset{?}{=}\frac{1}{4}\bigl(9L'(f_{100},0)-38L'(f_{20},0)\bigr),
\end{equation}
where $\alpha=17766-7047\sqrt[3]{2}-5589\sqrt[3]{4}, \beta=243\sqrt{3}(29\sqrt[3]{2}-23\sqrt[3]{4}), f_{20}(\tau)=\eta(2\tau)^2\eta(10\tau)^2$ and $f_{100}(\tau)$ is a cusp form of level $100$. One can check that the elliptic curves $C_k$ related to \eqref{Samartconj1} and \eqref{Samartconj2} have CM, while those related to \eqref{Samartconj3} and \eqref{Samartconj4} have no CM. 
It is also worth noting that $17766+14094\sqrt[3]{2}+11178\sqrt[3]{4}$ and $\alpha\pm i\beta$ are the three roots of
\[T^3-53298T^2+1635876T-19683000=0.\]

In this paper, we apply the method developed in \cite{TGW22} to the family $Q_k(x,y)$ and obtain the following results for $k\in\C-\mathcal{K}_Q^\circ$.
\begin{thm}\label{mainresult}
	The following identities are true:
	\begin{equation}\label{iden1}
		m_3\bigl(-4320-1944\sqrt{5}\bigr)=\frac{405}{4\pi^2}L(F_{225},2),
	\end{equation}
	\begin{equation}\label{iden2}
		m_3\bigl(-163296-35640\sqrt{21}\bigr)=\frac{567}{4\pi^2}L(F_{441},2),
	\end{equation}
	\begin{equation}\label{iden3}
		m_3\bigl(729+405\sqrt{3}\bigr)=\frac{81}{\pi^2}L(F_{144},2),\quad m_3\bigl(729-405\sqrt{3}\bigr)=\frac{324}{\pi^2}L(\widetilde{F}_{144},2),
	\end{equation}
	\begin{equation}\label{iden4}
		m_3\bigl(17766\!+\!14094\sqrt[3]{2}\!+\!11178\sqrt[3]{4}\bigr)\!=\!\frac{243}{2\pi^2}L(F_{108},2),\quad m_3(\alpha\pm i\beta)\!=\!\frac{486}{\pi^2}L(\widetilde{F}_{108},2),
	\end{equation}
	\begin{equation}\label{iden5}
		\begin{split}
			&m_3\bigl(-216(18964+13149\sqrt[3]{3}+9117\sqrt[3]{9})\bigr)=\frac{729}{4\pi^2}L(F_{243},2),\\[4pt]
			&m_3\bigl(-108(37928\!-\!13149 \sqrt[3]{3} (1\!\pm\! i \sqrt{3})\!-\!9117\sqrt[3]{9} (1\!\mp\! i \sqrt{3}))\bigr)=\frac{5103}{4\pi^2}L(\widetilde{F}_{243},2),
		\end{split}
	\end{equation}
	\begin{equation}\label{iden6}
		m_3\bigl(6+3\sqrt[3]{2}-9\sqrt[3]{4}\pm 3i\sqrt{3}(\sqrt[3]{2}+3\sqrt[3]{4})\bigr)\!=\!\frac{81}{2\pi^2}L(G_{108},2),
	\end{equation}
	\begin{equation}\label{iden7}
		m_3\bigl(96-28\sqrt[3]{3}+36\sqrt[3]{9}\pm 4i\sqrt{3}(7\sqrt[3]{3}+9\sqrt[3]{9})\bigr)\!=\!\frac{243}{4\pi^2}L(G_{243},2),
	\end{equation}
	where $\alpha, \beta$ are the same as those appearing in \eqref{Samartconj2} and $F_N,\widetilde{F}_{N},G_N$ are normalized (i.e., with leading coefficient $1$) weight $2$ cusp forms of the form
	\[r\sum_{m,n\in\Z}\chi_{-3}(n)(lm+sn)q^{am^2+bmn+cn^2}\]
	in $\mathcal{S}_2(\Gamma_0(N))$. We make them clear in Table \ref{cuspforms} by listing the corresponding $r,l,s,a,b,c$.
\end{thm}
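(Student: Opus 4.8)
The common engine for all ten identities is Villegas' formula \eqref{Villegasformula}: once we exhibit a CM point $\tau\in\mathcal{F}'$ whose Hauptmodul value $t(\tau)$ equals the algebraic number inside the relevant $m_3(\cdot)$ and whose cube root avoids $\mathcal{K}_Q^\circ$, the Mahler measure is reduced to the twisted Kronecker--Eisenstein series
\[
S(\tau)=\underset{m,n\in\Z}{{\sum}'}\frac{\chi_{-3}(n)\bigl(3m\re(\tau)+n\bigr)}{|3m\tau+n|^4},
\]
so everything comes down to (a) locating the CM points, (b) proving rigorously that $t(\tau)$ is the stated number and that $\sqrt[3]{t(\tau)}\notin\mathcal{K}_Q^\circ$, and (c) evaluating $S(\tau)$ as an $L$-value.

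For (a) the plan is to run the search of \cite{TGW22}, now adapted to $X_0(3)$, over primitive forms $\tau=[a,b,c]$ with $h(\tau)\leqslant 3$ lying in $\mathcal{F}'$, computing $t(\tau)=27+\bigl(\eta(\tau)/\eta(3\tau)\bigr)^{12}$ to high precision and matching against the arguments of \eqref{iden1}--\eqref{iden7}; the resulting discriminants, the forms $[a,b,c]$, and the companion quadratic forms governing the lattice $\Z+3\tau\Z$ are recorded in the paper alongside Table~\ref{cuspforms}. Part of (b), the region condition, is then a routine real-analytic check: one verifies that $\sqrt[3]{t(\tau)}$ lies outside the hypocycloidal region with vertices $3,3e^{2\pi i/3},3e^{4\pi i/3}$. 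The algebraicity statement for $t(\tau)$ is not settled by numerics, so I would argue via complex multiplication: $t$ is a Hauptmodul for $X_0(3)$, so its singular values are algebraic and their conjugates over $\Q$ are again values $t(\tau')$ with $\tau'$ running over the at most $h(\tau)h(3\tau)$ cusps produced by the class group action; Shimura's reciprocity law applied to $t$ (equivalently to the pair $(\eta(\tau),\eta(3\tau))$ through the modular equation $\Phi_3$ and the $j$-function) determines this orbit and hence the minimal polynomial of $t(\tau)$. Comparing with the minimal polynomials of the stated numbers---the quadratics over $\Q(\sqrt5),\Q(\sqrt{21}),\Q(\sqrt3)$ in \eqref{iden1}--\eqref{iden3}, the cubic $T^3-53298T^2+1635876T-19683000$ for \eqref{iden4}, and the analogous cubics in \eqref{iden5}--\eqref{iden7}---forces the equalities; for the complex arguments ($\alpha\pm i\beta$ and their analogues) one uses in addition that $m_3$ depends only on $k^3=t$, so the complex-conjugate CM points $[a,\pm b,c]$ give equal Mahler measures.

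For (c), write $\omega=3\tau$; the numerator is $\re(m\omega+n)$ and $S(\tau)$ is a Hecke-type Eisenstein--Kronecker series over the lattice $\Z+\Z\omega$, twisted by the period-$3$ character $n\mapsto\chi_{-3}(n)$ and evaluated at $s=2$. Since $\tau$ is a CM point and the $\chi_{-3}$-twist is the one compatible with $\Q(\sqrt{-3})$---which is the CM field underlying every form in Table~\ref{cuspforms}, as each level there has the shape $3N(\mathfrak f)$---the lattice $\Z+\Z\omega$ is homothetic to an ideal of an order of $\Q(\sqrt{-3})$, and the Eisenstein--Kronecker calculus at CM points used by Rodriguez Villegas \cite{RV98} expresses $S(\tau)$ as an explicit elementary multiple of $\im(\tau)^{-1}$ times $L(\theta,2)$, where $\theta$ is the weight-$2$ theta series attached to the binary quadratic form governing $\Z+3\tau\Z$ together with the Hecke character cut out by $\chi_{-3}$. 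Matching $\theta$ against the normalization $r\sum_{m,n}\chi_{-3}(n)(lm+sn)q^{am^2+bmn+cn^2}$ of Table~\ref{cuspforms} identifies it with $F_N$, $\widetilde{F}_N$ or $G_N$ and collapses \eqref{Villegasformula} to the asserted identity. When $h(\tau)>1$ the form $\theta$ is in general a non-eigenform combination of the level-$N$ newform and its oldform/twist companions; in the cases of \eqref{iden4} this recovers---and now proves---Samart's conjectures \eqref{Samartconj1} and \eqref{Samartconj2} after $L(F_{108},2)$ and $L(\widetilde{F}_{108},2)$ are rewritten in terms of $L'(f_{108},0),L'(f_{36},0),L'(f_{27},0)$ using the newform decomposition and the weight-$2$ functional equation $s\leftrightarrow 2-s$.

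I expect the principal obstacle to be part (c): pinning down the \emph{exact} rational prefactors $\tfrac{405}{4},\tfrac{567}{4},81,324,\tfrac{243}{2},486,\tfrac{729}{4},\tfrac{5103}{4},\tfrac{81}{2},\tfrac{243}{4}$ requires careful bookkeeping of the covolume and index of $\Z+3\tau\Z$ inside the relevant order, the number of roots of unity, the conductor of the Hecke character, and the precise normalization relating the Eisenstein--Kronecker value to $L(\theta,2)$; an error by a unit or by the index of an order is easy to commit and can be ruled out only by a clean conceptual derivation rather than by numerical agreement. A secondary difficulty is keeping step (b) genuinely rigorous: Shimura reciprocity on $X_0(3)$ combined with a crude height bound on the singular value $t(\tau)$ is the safest way to upgrade the numerical identification of the minimal polynomial to a proof.
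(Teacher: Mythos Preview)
Your overall architecture---locate the CM points, certify the value of $t(\tau)$, and read the Villegas series as an $L$-value of a theta series---matches the paper's. The differences are in how (b) and (c) are carried out, and in both places the paper takes a shorter road than you anticipate.

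For step (b) you reach for Shimura reciprocity on $X_0(3)$. The paper avoids this entirely by using the single algebraic relation
\[
j(\tau)=\frac{t(\tau)\bigl(t(\tau)+216\bigr)^3}{\bigl(t(\tau)-27\bigr)^3},
\]
together with the exact value of the singular modulus $j(\tau_0)$. Since $h(\tau_0)\leqslant 3$ in every case, $j(\tau_0)$ is a root of an explicit integral polynomial of degree $\leqslant 3$ (obtained from the full Galois orbit), and then $t(\tau_0)$ is pinned down as the unique root of the above rational relation that agrees with the high-precision numerics. No reciprocity law, no height bound.

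For step (c) you invoke Eisenstein--Kronecker calculus and worry about tracking covolumes, unit groups, and conductors to get the prefactors right. In the paper this step is essentially a one-line substitution: once $\tau_0=[a,b,c]$ is fixed, $|3m\tau_0+n|^2$ is a rational multiple of an integral binary form $Am^2+Bmn+Cn^2$, and $3m\re(\tau_0)+n$ is a rational multiple of a linear form $lm+sn$; the Villegas sum is then \emph{by definition} a rational multiple of $L(\theta,2)$ with $\theta(\tau)=r\sum_{m,n}\chi_{-3}(n)(lm+sn)q^{Am^2+Bmn+Cn^2}$. The constants $\tfrac{405}{4},\tfrac{567}{4},\ldots$ drop out of $\frac{81\sqrt{3}\,\im(\tau_0)}{4\pi^2}$ and the elementary normalizations, with no hidden arithmetic of orders. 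Your anticipated ``principal obstacle'' disappears.

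What you under-emphasize, and what does require genuine work in the paper, is the assertion that each $\theta$ lies in $\mathcal{S}_2(\Gamma_0(N))$ (not merely in $\mathcal{S}_2(\Gamma(N))$). The paper handles this by writing $\theta$ as a combination $\Theta(P,L,Y_1;\tau)-\Theta(P,L,Y_2;\tau)$ of spherical theta functions and then checking invariance under $\Gamma_0(N)$ via Schoeneberg's transformation formula, case-splitting on $d\bmod 6$. This is the one place where a nontrivial verification is needed, and it is absent from your outline.
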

\begin{table}[ht]
	\centering
		\begin{tabular}{|ccccccc|ccccccc|}
			\hline
			Cusp forms  & $r$ & $l$  & $s$  & $a$ & $b$ & $c$ & Cusp forms  & $r$ & $l$  & $s$  & $a$ & $b$ & $c$ \\[0pt]
			\hline
			$F_{108}$ & $1/2$ & $0$ & $1$ & $27$ & $0$ & $1$ & $F_{243}$ & $1/4$ & $3$ & $2$ & $63$ & $3$ & $1$ \\[1pt]
			$\widetilde{F}_{108}$ & $1/8$ & $3$ & $4$ & $9$ & $6$ & $4$ & $\widetilde{F}_{243}$ & $1/28$ & $3$ & $14$ & $9$ & $3$ & $7$ \\[1pt]
			$F_{144}$ & $1/2$ & $0$ & $1$ & $12$ & $0$ & $1$ & $F_{441}$ & $1/4$ & $3$ & $2$ & $39$ & $3$ & $1$ \\[1pt]
			$\widetilde{F}_{144}$ & $1/2$ & $0$ & $1$ & $3$ & $0$ & $4$ & $G_{108}$ & $1/2$ & $1$ & $1$ & $4$ & $2$ & $1$ \\[1pt]
			$F_{225}$ & $1/4$ & $3$ & $2$ & $21$ & $3$ & $1$ & $G_{243}$ & $1/4$ & $1$ & $2$ & $7$ & $1$ & $1$ \\[1pt]
			\hline
		\end{tabular}
	\smallskip
	\caption{Cusp forms in Theorem \ref{mainresult}.}
	\label{cuspforms}
\end{table}
Since $6-6\sqrt[3]{2}+18\sqrt[3]{4}$ and $6+3\sqrt[3]{2}-9\sqrt[3]{4}\pm 3i\sqrt{3}(\sqrt[3]{2}+3\sqrt[3]{4})$ are the three roots of
	\[T^3-18T^2+756T-27000=0,\]
our result \eqref{iden6} can be seen as a supplement to Samart's identity \eqref{Samartresult}. Moreover, \eqref{iden4} and some linear combinations of modular forms yield:
\begin{cor}\label{Samartconj}
	The identities \eqref{Samartconj1} and \eqref{Samartconj2} are true.
\end{cor}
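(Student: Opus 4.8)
The plan is to peel off everything about Mahler measures and CM points and reduce Corollary~\ref{Samartconj} to a pair of identities living purely in the world of weight-$2$ modular forms. By \eqref{iden4}, the left-hand sides of \eqref{Samartconj1} and \eqref{Samartconj2} already equal $\tfrac{243}{2\pi^2}L(F_{108},2)$ and $\tfrac{486}{\pi^2}L(\widetilde F_{108},2)$, respectively, so it is enough to establish
\begin{align*}
\frac{243}{2\pi^2}L(F_{108},2)&=\tfrac{3}{2}\bigl(L'(f_{108},0)+3L'(f_{36},0)+3L'(f_{27},0)\bigr),\\
\frac{486}{\pi^2}L(\widetilde F_{108},2)&=\tfrac{3}{2}\bigl(L'(f_{108},0)+3L'(f_{36},0)-6L'(f_{27},0)\bigr).
\end{align*}

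The first step is to trade each $L'(f_N,0)$ for $L(f_N,2)$. Since $f_{27},f_{36},f_{108}$ are the weight-$2$ newforms attached to rank-$0$ CM elliptic curves of conductors $27,36,108$, each has root number $+1$, so the completed $L$-function $\Lambda(f_N,s)=(\sqrt N/2\pi)^{s}\Gamma(s)L(f_N,s)$ satisfies $\Lambda(f_N,s)=\Lambda(f_N,2-s)$; comparing leading terms at $s=0$ (where $\Gamma$ has a simple pole and $L(f_N,0)=0$) and at $s=2$ gives $L'(f_N,0)=\Lambda(f_N,0)=\Lambda(f_N,2)=\tfrac{N}{4\pi^{2}}L(f_N,2)$. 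Substituting $N=27,36,108$ turns the two target identities into the equivalent linear relations
\begin{align*}
L(F_{108},2)&=\tfrac13 L(f_{108},2)+\tfrac13 L(f_{36},2)+\tfrac14 L(f_{27},2),\\
L(\widetilde F_{108},2)&=\tfrac1{12}L(f_{108},2)+\tfrac1{12}L(f_{36},2)-\tfrac18 L(f_{27},2),
\end{align*}
which no longer mention $L'(\cdot,0)$.

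It remains to prove these $L$-value relations, and for that the plan is to decompose $F_{108}$ and $\widetilde F_{108}$ into Hecke eigenforms. By Table~\ref{cuspforms}, both are theta series of positive definite binary quadratic forms twisted by a linear spherical polynomial and by $\chi_{-3}$, so by the classical theory of such series they are genuine weight-$2$ cusp forms on $\Gamma_0(108)$. As $27\mid 108$ and $36\mid 108$, the space $\mathcal S_2(\Gamma_0(108))$ is spanned by the newform $f_{108}$, by the oldclasses of $f_{27}$ (i.e. $f_{27}(\tau),f_{27}(2\tau),f_{27}(4\tau)$) and of $f_{36}$ (i.e. $f_{36}(\tau),f_{36}(3\tau)$), and by the level-$54$ newforms and their shifts; hence each of $F_{108},\widetilde F_{108}$ is a definite $\Q$-linear combination $\sum_d c_d\,g(d\tau)$ of these eigenforms, with the $c_d$ determined by matching finitely many Fourier coefficients up to the Sturm bound of $\mathcal S_2(\Gamma_0(108))$. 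One then finds that the level-$54$ contributions vanish, and using $L\bigl(g(d\tau),s\bigr)=d^{-s}L(g,s)$ with $s=2$ collapses the decompositions to exactly the two $L$-value relations above, finishing the proof.

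The main obstacle is the bookkeeping in this last step: one must pin down precisely which shifts $g(d\tau)$ occur in $F_{108}$ and $\widetilde F_{108}$ and check that, after folding in the factors $d^{-2}$ from $L(g(d\tau),2)$ and the factors $N/(4\pi^2)$ from the functional equations, the rational coefficients collapse to the clean multiples $\tfrac32$ and $3\cdot\tfrac32$ (resp. $-6\cdot\tfrac32$) appearing in \eqref{Samartconj1} and \eqref{Samartconj2}. Because every form in sight is a CM form for $\Q(\sqrt{-3})$, this is a finite, transparent computation rather than a conceptual difficulty; the one thing to verify beforehand is that the root numbers of $f_{27},f_{36},f_{108}$ are indeed all $+1$, so that the signs in Samart's identities come out correctly.
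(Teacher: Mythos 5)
Your proposal is correct and follows essentially the same route as the paper: convert each $L'(f_N,0)$ to $\tfrac{N}{4\pi^2}L(f_N,2)$ via the functional equation, reduce \eqref{Samartconj1} and \eqref{Samartconj2} through \eqref{iden4} to the two linear relations among $L$-values at $s=2$, and then verify those by decomposing $F_{108}$ and $\widetilde F_{108}$ into eigenforms (the paper finds $F_{108}=\tfrac13 f_{108}+\tfrac13 f_{36}+\tfrac13 f_{27}-\tfrac43 f_{27}(4\cdot)$ and $\widetilde F_{108}=\tfrac1{12}f_{108}+\tfrac1{12}f_{36}-\tfrac16 f_{27}+\tfrac23 f_{27}(4\cdot)$ using the Sturm bound $36$). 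Your remark that the level-$54$ contributions vanish is consistent with these explicit decompositions.
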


When $k=\sqrt[3]{729\pm405\sqrt{3}}$, guided by Beilinson's conjecture for curves over number fields,  we can also prove the following result similar to \eqref{mahlerasdet}.

\begin{thm}\label{detmahler}
	Consider $C_{\sqrt[3]{729\pm405\sqrt{3}}}$ as elliptic curves defined over $\Q(\sqrt{3})$ (see \eqref{EllipticcurveCk}). Then we have
	\begin{equation*}
		\begin{split}
			\left|\det{\begin{pmatrix}m_3(729+405\sqrt{3})& m_3(729-405\sqrt{3})\\
				m_3(729-405\sqrt{3})&4m_3(729+405\sqrt{3})\end{pmatrix}}\right| & =\frac{19683}{\pi^4}L(C_{\sqrt[3]{729\pm405\sqrt{3}}},2) \\
				& =\frac{243}{8}L''(C_{\sqrt[3]{729\pm405\sqrt{3}}},0).
		\end{split}
	\end{equation*}
\end{thm}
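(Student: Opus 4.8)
The plan is to reduce the statement, using the identity \eqref{iden3} already established in Theorem \ref{mainresult}, to a clean relation between $L\bigl(C_{\sqrt[3]{729\pm405\sqrt3}},2\bigr)$ and the two $L$-values $L(F_{144},2),L(\widetilde{F}_{144},2)$, and then to prove that relation by factoring the $L$-function of $C:=C_{\sqrt[3]{729+405\sqrt3}}$ over $\Q(\sqrt3)$ into modular $L$-functions. Write $A:=m_3(729+405\sqrt3)$ and $B:=m_3(729-405\sqrt3)$; the matrix has only these two entries, so its determinant is $4A^2-B^2$. Substituting $A=\tfrac{81}{\pi^2}L(F_{144},2)$ and $B=\tfrac{324}{\pi^2}L(\widetilde{F}_{144},2)$ from \eqref{iden3} and using $324=4\cdot 81$ gives
\[
\Bigl|\det\begin{pmatrix}A&B\\ B&4A\end{pmatrix}\Bigr|=\frac{4\cdot 81^{2}}{\pi^{4}}\bigl|L(F_{144},2)^{2}-4\,L(\widetilde{F}_{144},2)^{2}\bigr|=\frac{26244}{\pi^{4}}\bigl|L(F_{144},2)-2L(\widetilde{F}_{144},2)\bigr|\,\bigl|L(F_{144},2)+2L(\widetilde{F}_{144},2)\bigr|.
\]
Since $26244=2^{2}3^{8}=\tfrac43\cdot 3^{9}$ and $3^{9}=19683$, Theorem \ref{detmahler} is equivalent to the Mahler-measure-free identity
\[
L\bigl(C_{\sqrt[3]{729\pm405\sqrt3}},2\bigr)=\tfrac43\,\bigl|L(F_{144},2)^{2}-4\,L(\widetilde{F}_{144},2)^{2}\bigr|,
\]
and everything below is devoted to this.

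First I would pin down the arithmetic of $C$. Since the Weierstrass model \eqref{EllipticcurveCk} depends on $k$ only through $k^{3}$, $C$ is defined over $\Q(\sqrt3)$ and its $\Gal(\Q(\sqrt3)/\Q)$-conjugate is $C_{\sqrt[3]{729-405\sqrt3}}$; a direct check (with \textsf{SageMath}, as in the remark after \eqref{Samartresult}) shows that $C$ has complex multiplication by $\Q(\sqrt{-3})$ and is isogenous over $\Q(\sqrt3)$ to its conjugate, so both curves in the statement are $\Q$-curves sharing the single $L$-function written there. Hence the abelian surface $\Res_{\Q(\sqrt3)/\Q}C$ has CM, is isogenous over $\Q$ to a product $E_{1}\times E_{2}$ of elliptic curves over $\Q$ with CM by $\Q(\sqrt{-3})$, and $L(C/\Q(\sqrt3),s)=L(E_{1},s)L(E_{2},s)$; by the known modularity of CM curves, both $L(E_{i},s)$ are $L$-functions of explicit weight-$2$ CM newforms whose conductors (dividing a power of $6$) and $q$-expansions I would determine. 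Matching $L(E_{1},2)$ and $L(E_{2},2)$ against $L(F_{144},2)$ and $L(\widetilde{F}_{144},2)$ should then produce relations of the shape
\[
L(E_{1},2)=c_{1}\bigl(L(F_{144},2)-2L(\widetilde{F}_{144},2)\bigr),\qquad L(E_{2},2)=c_{2}\bigl(L(F_{144},2)+2L(\widetilde{F}_{144},2)\bigr),
\]
with $c_{1},c_{2}\in\Q^{\times}$ and $c_{1}c_{2}=\tfrac43$, which is exactly the identity displayed at the end of the previous paragraph.

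The heart of the proof is establishing these last two relations, and this is where I expect the main obstacle to lie. The key point is that $E_{1},E_{2},F_{144},\widetilde{F}_{144}$ all have CM by $\Q(\sqrt{-3})$, so — in the manner of \cite{RV98,Den97,TGW22} — each of $L(E_{i},2),L(F_{144},2),L(\widetilde{F}_{144},2)$ is a Kronecker--Eisenstein series evaluated at a CM point of $\Q(\sqrt{-3})$ (recall $F_{144}$ and $\widetilde{F}_{144}$ are the theta series of the two ideal classes of discriminant $-48$, $h(-48)=2$, separated by the nontrivial genus character). The desired linear relations among these four values then follow from the decomposition of the relevant Hecke characters, equivalently from manipulating the corresponding lattice sums — the very mechanism that yields \eqref{iden3} from Theorem \ref{Villegasthm}. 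The delicate part is the exact bookkeeping: relating the CM point attached to $C$ to those attached to $F_{144}$ and $\widetilde{F}_{144}$, tracking the bad Euler factors at $2$ and $3$, and controlling all the powers of $3$ so that the rational constant comes out as $\tfrac43$, i.e.\ $19683=3^{9}$ in the stated form.

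Finally, as the conceptual reason such a determinant identity holds and as a check on the constant, I would note that the left-hand side of Theorem \ref{detmahler} is, up to $\Q^{\times}$ and $\pi^{4}$, the Beilinson regulator of $K_{2}$ of $C$ over the real quadratic field $\Q(\sqrt3)$: its two columns correspond to the two real places $\sqrt3\mapsto\pm\sqrt3$, at which the regulator pairing is computed by the Mahler measures via Theorem \ref{Villegasthm}, and its two rows correspond to two natural elements of $K_{2}$ — the symbol $\{x,y\}$ coming from $Q_{k}(x,y)=0$ and a second element obtained from it by an isogeny of $C$, which is what produces the asymmetric entry $4\,m_3(729+405\sqrt3)$. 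Beilinson's conjecture for $K_{2}$ of curves over number fields — known for curves with complex multiplication — then predicts precisely the shape of Theorem \ref{detmahler}, with the rational constant pinned down by the computation sketched above.
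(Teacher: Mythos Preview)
Your plan is correct and coincides with the paper's: reduce via \eqref{iden3} to the identity $L(C,2)=\tfrac{4}{3}\bigl|L(F_{144},2)^{2}-4L(\widetilde F_{144},2)^{2}\bigr|$, factor $L(C,s)$ as a product of two rational modular $L$-functions, and express $L(F_{144},2),L(\widetilde F_{144},2)$ linearly in those factors. The execution of the key step differs, however. The paper identifies the two factors concretely via the LMFDB entry for $C$ as $L(g_{36},s)$ and $L(g_{144},s)$ with $g_{36}=\eta(6\tau)^{4}$ and $g_{144}=\eta(12\tau)^{12}/(\eta(6\tau)^{4}\eta(24\tau)^{4})$, and then proves
\[
F_{144}=\tfrac12 g_{36}(\tau)-2g_{36}(4\tau)+\tfrac12 g_{144}(\tau),\qquad \widetilde F_{144}=-\tfrac14 g_{36}(\tau)+g_{36}(4\tau)+\tfrac14 g_{144}(\tau)
\]
by a Sturm-bound check in $\mathcal S_{2}(\Gamma_{0}(144))$ (bound $48$). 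Taking $L(\cdot,2)$ gives $L(F_{144},2)-2L(\widetilde F_{144},2)=\tfrac34 L(g_{36},2)$ and $L(F_{144},2)+2L(\widetilde F_{144},2)=L(g_{144},2)$, i.e.\ your $c_{1}=\tfrac43$, $c_{2}=1$. So where you anticipate ``delicate bookkeeping'' via Hecke-character decompositions and lattice-sum manipulations, the paper bypasses all of it with a finite $q$-expansion verification --- a shortcut worth adopting. Your Beilinson paragraph also matches the paper's Section~\ref{LTTLOIEC}, which pushes it further by constructing the isogeny $\phi$ (with $\phi\circ\phi^{\sigma}=[4]$, the source of the entry $4$) and the cycles $\gamma^{\pm}$ so that the determinant in the statement is literally the regulator \eqref{theregulator}; but, as you say, this is motivation and not needed for the proof itself.
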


For $k\in \mathcal{K}_Q^\circ$, the zero locus of $Q_k(x,y)$ will intersect with $\mathbb{T}^2$. We cannot expect $m_3(k^3)$ to be related to  $L$-values of modular forms or elliptic curves in this case, since we cannot write $m_3(k^3)$ as a regulator integral over some \emph{closed} path and thus Beilinson's conjecture does not work. In \cite{Sam23}, Samart turned to the tempered polynomial family
\[\widetilde{Q}_k(x,y)=y^2+(x^2-kx)y+x.\]
Note that $m(Q_k)=m(\widetilde{Q}_k)$ because $(x^2y)^3Q_k(y/x^2,1/xy)=\widetilde{Q}_k(x^3,y^3)$. When $k\in \mathcal{K}_Q^\circ\cap\R=(-1,3)$, he proved that the zero locus of $\widetilde{Q}_k(x,y)$ intersects with $\mathbb{T}^2$ at
\[\left\{(e^{i\theta},\tilde{y}^\pm(e^{i\theta}))\mid\theta=0,\pm\cos^{-1}\left(\frac{k-1}{2}\right)\right\},\]
where $\tilde{y}^\pm(x)=-(x^2-kx)\Bigl(\frac{1}{2}\pm\sqrt{\frac{1}{4}-\frac{1}{x(x-k)^2}}\Bigr)$ with the square root chosen to be the principal branch. Clearly, one can factorize $\widetilde{Q}_k(x,y)$ as $(y-\tilde{y}^+(x))(y-\tilde{y}^-(x))$. Samart then introduced the modified Mahler measure
\begin{equation}\label{modifiedMM}
	\tilde{n}(k):=m(\widetilde{Q}_k)-\frac{3}{\pi}\int_{\cos^{-1}\left(\frac{k-1}{2}\right)}^{\pi}\log|\tilde{y}^+(e^{i\theta})|d\theta.
\end{equation}
This modification allowed him to interpret $\tilde{n}(k)$ as the regulator integral over a carefully chosen closed path on the Riemann surface associated to $\{(x,y)\in\C^2\mid\widetilde{Q}_k(x,y)=0\}$ for $k\in(-1,3)$. By using the modular unit parametrization for $\widetilde{Q}_2(x,y)=0$, he proved that
\[\tilde{n}(2)=-3L'(C_2,0).\]
Based on numerical evidences, he also made other conjectures for $\tilde{n}(k)$ with $k^3=1,2,\cdots,26$. Observe that when $k=\sqrt[3]{24}$, the elliptic curve
\[C_{\sqrt[3]{24}}:Y^2=X^3-15552X^2-8957952X-1289945088\]
has CM. The last result of this paper is the following identity that was conjectured by Samart in \cite[Table 2]{Sam23}.
\begin{thm}\label{modifiedMMasEllipticcurve}
	Let $\tilde{n}(k)$ be Samart's modified Mahler measure \eqref{modifiedMM}. Then we have
	\[\tilde{n}(\sqrt[3]{24})=-3L'(C_{\sqrt[3]{24}},0).\]
\end{thm}

This paper is organized as follows. In Section \ref{CPATA}, we briefly introduce the theory of complex multiplication. An algorithm is designed to search for all CM points in $\mathcal{F}'$ such that $h(\tau)\leqslant 3,h(\tau)h(3\tau)\leqslant 4$. In Section \ref{TRWCF}, we will prove Theorem \ref{mainresult} and Corollary \ref{Samartconj}. A transformation formula for general theta functions is used to verify that the modular forms in Table \ref{cuspforms} are indeed cusp forms for $\Gamma_0(N)$. In Section \ref{LTTLOIEC}, we construct the Beilinson regulator that relates $m_3(729\pm405\sqrt{3})$ to $L(C_{\sqrt[3]{729\pm405\sqrt{3}}},2)$. This can help us to prove Theorem \ref{detmahler}. Finally, according to Samart's hypergeometric formula for $\tilde{n}(k)$, we will prove Theorem \ref{modifiedMMasEllipticcurve}. 

\section{CM points and the algorithm}\label{CPATA}

Let $E$ be an elliptic curve over $\C$. Then $E=\C/\Z\oplus\Z\tau$ for some $\tau\in\mathcal{H}$ that is unique up to an action by $\SL(2,\Z)$. Recall that the $j$-invariant of $E$ is defined by
\[j(\tau)=\frac{1}{q}+744+196884q+21493760q^2+\cdots,\]
where $q=e^{2\pi i\tau}$. It is well known that $E$ has CM if and only if $\tau$ is a CM point. Furthermore, $j(\tau)$ are algebraic integers of degree $h(\tau)$ if $\tau$ takes CM points, these algebraic integers are called \emph{singular moduli}. When $\tau=[a,b,c]$ is a CM point, we call the singular modulus $j(\tau)$ is of \emph{discriminant} $b^2-4ac$. This is a convenience borrowed from \cite{BHK20}. For every negative integer $D$ with $D\equiv 0$ or $1\mod 4$, there are exactly $h(D)$ different singular moduli of discriminant $D$ which form a full Galois orbit over $\Q$. Let $\mathcal{F}=\{\tau\in\mathcal{H}\mid-1/2\leqslant \re(\tau)\leqslant 1/2,|\tau|\geqslant 1\}$ be the fundamental domain of $\SL(2,\Z)$. We can find $h(D)$ CM points $\tau_1,\cdots,\tau_{h(D)}\in\mathcal{F}$ with discriminant $D$ that are in different $\SL(2,\Z)$-orbits. Then $j(\tau_1), \cdots, j(\tau_{h(D)})$ are all the different singular moduli of discriminant $D$. Moreover, 
\[\prod_{i=1}^{h(D)}\bigl(X-j(\tau_i)\bigr)\]
is the monic polynomial in $\Z[X]$ that makes $j(\tau_1), \cdots, j(\tau_{h(D)})$ algebraic integers (see, for instance, \cite{Cox,Sil94}).

For a general congruence subgroup $\Gamma_0(N),N\geqslant 1$, its modular functions also take algebraic values at CM points under suitable conditions. Let $\{\gamma_1,\cdots,\gamma_r\}$ be a set of right coset representatives of $\Gamma_0(N)$ in $\SL(2,\Z)$. It is known that
\[r=\bigl[\SL(2,\Z):\Gamma_0(N)\bigr]=N\prod_{\substack{p\mid N\\ p\ \text{prime}}}\Bigl(1+\frac{1}{p}\Bigr),\]
and there exists a polynomial $\Phi_N(X,Y)\in\Z[X,Y]$ (the so-called \emph{modular equation} for $\Gamma_0(N)$) such that
\begin{equation}\label{modularequation}
	\Phi_N(X,j(\tau))=\prod_{i=1}^{r}\bigl(X-j(N\gamma_i\tau)\bigr).
\end{equation}
\begin{pro}[{\cite[Proposition 12.7]{Cox}}]\label{Coxtheorem}
	Let $f(\tau)$ be a modular function for $\Gamma_0(N)$ whose $q$-expansion has rational coefficients. Then:
	\begin{enumerate}
		\item $f(\tau)\in\Q(j(\tau),j(N\tau))$.
		\item Assume in addition that $f(\tau)$ is holomorphic on $\mathcal{H}$, and let $\tau_0\in\mathcal{H}$. If
		\[\frac{\partial\Phi_N}{\partial X}\bigl(j(N\tau_0),j(\tau_0)\bigr)\neq 0,\]
		then $f(\tau_0)\in \Q(j(\tau_0),j(N\tau_0))$.
	\end{enumerate}
\end{pro}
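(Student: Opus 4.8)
The plan is to establish the two assertions in turn: first the function-field identity $f\in\Q(j(\tau),j(N\tau))$, and then the specialization $f(\tau_0)\in\Q(j(\tau_0),j(N\tau_0))$. The first is a statement about the arithmetic of the modular curve $X_0(N)$, while the second concerns evaluating a rational function at a point, where the hypothesis on $\partial\Phi_N/\partial X$ plays the decisive role. For the first assertion I would begin by recording that $j(N\tau)$ is itself a modular function for $\Gamma_0(N)$: for $\gamma=\left(\begin{smallmatrix}a&b\\c&d\end{smallmatrix}\right)\in\Gamma_0(N)$ one has $N\mid c$, and writing $c=Nc'$ the matrix $\left(\begin{smallmatrix}a&Nb\\c'&d\end{smallmatrix}\right)$ lies in $\SL(2,\Z)$ and carries $N\tau$ to $N\gamma\tau$, whence $j(N\gamma\tau)=j(N\tau)$. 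The polynomial $\Phi_N(X,j(\tau))$ of \eqref{modularequation} is irreducible over $\C(j(\tau))$ (equivalently, $X_0(N)$ is irreducible) and is therefore the minimal polynomial of $j(N\tau)$, so $[\C(j(\tau),j(N\tau)):\C(j(\tau))]=r$. As this matches the degree $r=[\SL(2,\Z):\Gamma_0(N)]$ of the covering $X_0(N)\to X(1)$, the field of all $\Gamma_0(N)$-modular functions over $\C$ equals $\C(j(\tau),j(N\tau))$; in particular $f\in\C(j(\tau),j(N\tau))$.

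To descend the coefficients to $\Q$ I would invoke the action of $\Aut(\C/\Q)$ on Fourier expansions. Writing $f=\sum_{k=0}^{r-1}A_k(j(\tau))\,j(N\tau)^k$ in the power basis $\{1,j(N\tau),\dots,j(N\tau)^{r-1}\}$ of $\C(j(\tau),j(N\tau))$ over $\C(j(\tau))$, this representation is unique. Since $\Phi_N\in\Z[X,Y]$ and $j(\tau)$, $j(N\tau)$, $f$ all have rational $q$-expansions, they are fixed by every $\sigma\in\Aut(\C/\Q)$ acting on Fourier coefficients; applying $\sigma$ to the expression and using uniqueness gives $A_k^\sigma=A_k$ for all $k$, so each $A_k\in\Q(j(\tau))$ and hence $f\in\Q(j(\tau),j(N\tau))$.

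For the second assertion I would pass to the plane curve $C/\Q$ defined by $\Phi_N(u,v)=0$, on which $f$ corresponds—via the first part—to a rational function defined over $\Q$ under the parametrization $\tau\mapsto(u,v)=(j(N\tau),j(\tau))$; here $\tau_0$ maps to the point $p_0=(j(N\tau_0),j(\tau_0))$. The hypothesis $\frac{\partial\Phi_N}{\partial X}(j(N\tau_0),j(\tau_0))\neq0$ says the derivative of $\Phi_N$ in its first argument is nonzero at $p_0$, so the gradient of $\Phi_N$ does not vanish there and $p_0$ is a smooth point of $C$; consequently the local ring $\mathcal{O}_{C,p_0}$ is a discrete valuation ring. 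Because $f$ is holomorphic on $\mathcal{H}$ it takes a finite value at $\tau_0$, so the corresponding rational function has no pole at $p_0$, i.e.\ it lies in $\mathcal{O}_{C,p_0}$; evaluation then lands in the residue field $\kappa(p_0)=\Q(j(N\tau_0),j(\tau_0))=\Q(j(\tau_0),j(N\tau_0))$, which is the desired conclusion.

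The main obstacle is this last specialization step, specifically translating the analytic hypothesis into algebraic regularity. One must verify that ``holomorphic on $\mathcal{H}$'' forces the rational function representing $f$ to be pole-free at $p_0$, and one must see that the nondegeneracy condition on $\partial\Phi_N/\partial X$ is exactly what is needed: it guarantees $p_0$ is a smooth point, so that a naive presentation $f=P/Q$ cannot degenerate into an indeterminate $0/0$ at $p_0$, and it identifies the residue field precisely as $\Q(j(\tau_0),j(N\tau_0))$. At a ramification point of $X_0(N)\to X(1)$, where this derivative vanishes, the point $p_0$ may fail to be smooth and the argument breaks down; pinning down that the stated inequality rules out exactly these bad points is the crux of the proof.
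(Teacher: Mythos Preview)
The paper does not prove this proposition: it is quoted from Cox \cite[Proposition 12.7]{Cox} and immediately applied to the Hauptmodul $t(\tau)$ for $\Gamma_0(3)$, with no argument supplied. There is therefore nothing in the paper against which to compare your proposal.

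For what it is worth, your outline follows the standard route---irreducibility of $\Phi_N$ over $\C(j)$ to get the function-field statement over $\C$, Galois descent on $q$-coefficients to reach $\Q$, and then reading the nonvanishing of $\partial\Phi_N/\partial X$ at $(j(N\tau_0),j(\tau_0))$ as smoothness of the plane model so that holomorphy of $f$ forces $f\in\mathcal{O}_{C,p_0}$ with residue field $\Q(j(\tau_0),j(N\tau_0))$. The one point you might make more explicit in the specialization step is that smoothness of $p_0$ on the singular plane model $\Phi_N=0$ ensures the normalization map from $X_0(N)$ is a local isomorphism there; this is what justifies transporting ``$f$ holomorphic at $\tau_0$'' on $X_0(N)$ to ``$f$ regular at $p_0$'' on the plane curve, without which the residue-field evaluation would not go through.
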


In the case when $N=3$, we have $r=3\cdot(1+\frac{1}{3})=4$. Let $S=\begin{pmatrix}0 & -1\\ 1 & 0\end{pmatrix},T=\begin{pmatrix}1 & 1\\ 0 & 1\end{pmatrix}$ be the generators of $\SL(2,\Z)$, then
\begin{equation}\label{RCRforGamma03}
	\gamma_1=I_2=\begin{pmatrix}1 & 0\\ 0 &1\end{pmatrix},\quad \gamma_2=S,\quad \gamma_3=ST,\quad \gamma_4=ST^{-1}
\end{equation}
form a set of right coset representatives of $\Gamma_0(3)$ in $\SL(2,\Z)$. These matrices can also transform the fundamental domain $\mathcal{F}$ to cover $\mathcal{F}'$:
\begin{equation}\label{coverofFprime}
	\mathcal{F}'=\bigcup_{i=1}^4\gamma_i\mathcal{F}.
\end{equation}
See Figure \ref{Fprime_figure} for the picture.

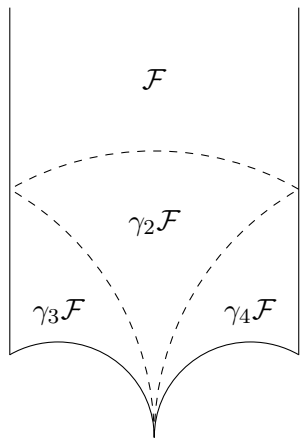
\begin{figure}[h]
	\centering
	\begin{tikzpicture}[scale=1.9]
		\draw (-1,0.575) -- (-1,3);
		\draw (1,0.575) -- (1,3);
		\draw[dashed] (1,1.732) arc(60:120:2);
		\node at (90:2.5) {{\small$\mathcal{F}$}};
		\node at (90:1.5) {{\small$\gamma_2\mathcal{F}$}};
		\node at (127:1.1) {{\small$\gamma_3\mathcal{F}$}};
		\node at (53:1.1) {{\small$\gamma_4\mathcal{F}$}};
		\draw[dashed] (0,0) arc(0:60:2);
		\draw[dashed] (0,0) arc(180:120:2);
		\draw (0,0) arc(0:120:0.6666);
		\draw (0,0) arc(180:60:0.6666);
	\end{tikzpicture}
	\caption{$\mathcal{F}'$ and its covering \eqref{coverofFprime}.}
	\label{Fprime_figure}
\end{figure}

Since $\eta(\tau)=e^\frac{2\pi i\tau}{24}\prod\limits_{n=1}^{\infty}(1-q^n)$, the modular function $t(\tau)=27+\bigl(\eta(\tau)/\eta(3\tau)\bigr)^{12}$ for $\Gamma_0(3)$ in Theorem \ref{Villegasthm} has $q$-expansion
\[t(\tau)=\frac{1}{q}+15+54q-76q^2-243q^3+1188q^4+\cdots,\]
the coefficients are rational integers. It is also easily seen that $t(\tau)$ is holomorphic on $\mathcal{H}$ since $\eta(\tau)$ has no zeros on $\mathcal{H}$. Thus, we can apply Proposition \ref{Coxtheorem} to $t(\tau)$ and prove the following result that is similar to \cite[Theorem 2.2]{TGW22}.
\begin{thm}\label{boundoft}
	Let $\tau_0\in\mathcal{H}$ be a CM point. If $j(3\tau_0)\neq j(3\gamma_i\tau_0)$ for $i=2,3,4$, where $\gamma_i$ are as in \eqref{RCRforGamma03}, then $t(\tau_0)$ is an algebraic number with degree no more than $h(\tau_0)h(3\tau_0)$.
\end{thm}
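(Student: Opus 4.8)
The plan is to apply Proposition \ref{Coxtheorem} directly to the modular function $f(\tau)=t(\tau)=27+\bigl(\eta(\tau)/\eta(3\tau)\bigr)^{12}$ for $\Gamma_0(3)$. As already observed above, $t$ is holomorphic on $\mathcal{H}$ (since $\eta$ has no zeros there) and its $q$-expansion has rational — in fact integer — coefficients, so the hypotheses of Proposition \ref{Coxtheorem} are in place except for the non-vanishing condition. Thus the only thing to check is that $\frac{\partial\Phi_3}{\partial X}\bigl(j(3\tau_0),j(\tau_0)\bigr)\neq 0$, after which the proposition yields $t(\tau_0)\in\Q\bigl(j(\tau_0),j(3\tau_0)\bigr)$, and the degree bound will follow from standard complex multiplication theory.

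To verify the non-vanishing I would unwind the definition \eqref{modularequation} of the modular equation for $N=3$ using the coset representatives in \eqref{RCRforGamma03}. Since $\gamma_1=I_2$, we have $\Phi_3\bigl(X,j(\tau)\bigr)=\bigl(X-j(3\tau)\bigr)\prod_{i=2}^{4}\bigl(X-j(3\gamma_i\tau)\bigr)$. Differentiating in $X$ via the Leibniz rule gives $\frac{\partial\Phi_3}{\partial X}\bigl(X,j(\tau)\bigr)=\sum_{i=1}^{4}\prod_{k\neq i}\bigl(X-j(3\gamma_k\tau)\bigr)$. Specializing $\tau=\tau_0$ and $X=j(3\tau_0)$, every summand with $i\in\{2,3,4\}$ still contains the factor $\bigl(X-j(3\gamma_1\tau_0)\bigr)=j(3\tau_0)-j(3\tau_0)=0$, so only the $i=1$ term survives and
\[
\frac{\partial\Phi_3}{\partial X}\bigl(j(3\tau_0),j(\tau_0)\bigr)=\prod_{i=2}^{4}\bigl(j(3\tau_0)-j(3\gamma_i\tau_0)\bigr).
\]
By the hypothesis $j(3\tau_0)\neq j(3\gamma_i\tau_0)$ for $i=2,3,4$, this product is nonzero, so Proposition \ref{Coxtheorem} applies and $t(\tau_0)\in\Q\bigl(j(\tau_0),j(3\tau_0)\bigr)$.

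It then remains to bound the degree. Since $\tau_0$ is a CM point, so is $3\tau_0$ (it is again an imaginary quadratic number in $\mathcal{H}$), hence $j(\tau_0)$ and $j(3\tau_0)$ are singular moduli, and by the facts recalled above they are algebraic of degrees $h(\tau_0)$ and $h(3\tau_0)$ respectively — the class numbers of the discriminants of $\tau_0$ and of $3\tau_0$. Therefore the compositum satisfies $\bigl[\Q\bigl(j(\tau_0),j(3\tau_0)\bigr):\Q\bigr]\leqslant\bigl[\Q(j(\tau_0)):\Q\bigr]\cdot\bigl[\Q(j(3\tau_0)):\Q\bigr]=h(\tau_0)h(3\tau_0)$, and $t(\tau_0)$, being an element of this number field, has degree over $\Q$ dividing — in particular at most — $h(\tau_0)h(3\tau_0)$. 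This completes the argument.

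There is no serious obstacle here: the proof is essentially a bookkeeping application of Proposition \ref{Coxtheorem}. The one point requiring a little care is recognizing that the partial-derivative condition of that proposition collapses, thanks to $\gamma_1=I_2$, to precisely the explicit inequalities $j(3\tau_0)\neq j(3\gamma_i\tau_0)$ ($i=2,3,4$) stated in the theorem — a condition that can then be tested in practice by computing with the modular polynomial $\Phi_3$. One should also keep in mind the trivial but necessary verification, carried out in the excerpt, that it is $t$ itself (and not merely $j$) that is holomorphic on $\mathcal{H}$ with rational $q$-expansion, since Proposition \ref{Coxtheorem} is being applied to $f=t$.
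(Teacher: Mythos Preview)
Your proof is correct and follows exactly the approach the paper indicates: the paper does not spell out a proof of Theorem~\ref{boundoft} but merely states that one ``can apply Proposition~\ref{Coxtheorem} to $t(\tau)$ and prove the following result,'' and you have faithfully filled in those details---verifying the non-vanishing of $\partial\Phi_3/\partial X$ via the coset decomposition and then invoking the CM degree bounds for $j(\tau_0)$ and $j(3\tau_0)$.
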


The above theorem implies that, in order to obtain some interesting CM points that keep the degrees of $t(\tau)$ not too high, we can search for CM points in $\mathcal{F}'$ with class numbers relatively small. In this work, we focus
our attention on CM points with class numbers $\leqslant 3$ and will use them to search for $t(\tau)$ that have degrees $\leqslant 4$ as algebraic numbers. To achieve this, our first task is to determine discriminants with some small class numbers. It is well known that for each positive integer $n$, only finitely many negative discriminants $D\equiv 0$ or $1\mod 4$ have $h(D)=n$. We list all negative discriminants that have class numbers $1,2$ and $3$ as follows.
\begin{equation*}
	\begin{split}
		h(D)=1\!\iff\! D = & -3,-4,-7,-8,-11,-12,-16,-19,-27,-28,-43,-67,-163;\\
		h(D)=2\!\iff\! D = & -15,-20,-24,-32,-35,-36,-40,-48,-51,-52,-60,-64,-72,\\
		& -75,-88,-91,-99,-100,-112,-115,-123,-147,-148,-187,-232,\\
		&-235,-267,-403,-427;\\
		h(D)=3\!\iff\! D = & -23,-31,-44,-59,-76,-83,-92,-107,-108,-124,-139,-172,\\
		&-211,-243,-268,-283,-307,-331,-379,-499,-547,-643,-652,\\
		&-883,-907.
	\end{split}
\end{equation*}
Just like what Huber, Schultz and Ye did in \cite[Algorithm 4.1]{HSY18}, we can apply the following algorithm to determine all CM points $\tau\in \mathcal{F}'$ such that $h(\tau)\leqslant 3, h(\tau)h(3\tau)\leqslant 4$.
\begin{algo}\label{Searchcmpoints} For each discriminant $D$ listed above, perform the following operations.
	\begin{enumerate}
		\item Determine all CM points $\tau=[a,b,c]$ in $\mathcal{F}$ with discriminant $D$  by solving the system
		\[\begin{cases}
			a,b,c\in\Z,a>0,\\
			\gcd(a,b,c)=1,\\
			b^2-4ac=D, \\
			|b|\leqslant a\leqslant c.
		\end{cases}\]
		\item Use the right coset representatives \eqref{RCRforGamma03} to translate these points obtained in (1) to get a full list of $CM$ points in $\mathcal{F}'$ with discriminant $D$.
		\item For each point $\tau$ obtained in (2), verify whether $h(\tau)h(3\tau)\leqslant 4$. If it passes this test, then output $\tau$.
	\end{enumerate}
\end{algo}

We implement Algorithm \ref{Searchcmpoints} in a \textsf{Mathematica} notebook. Thanks to the built-in function \texttt{DedekindEta[]}, $t(\tau)$ can be calculated to any given precision. This can help us to filter out the points that make $\sqrt[3]{t(\tau)}\in \C-\mathcal{K}^\circ_Q$ from the outputs of Algorithm \ref{Searchcmpoints}. We list these lucky ones and the corresponding numerical approximations (accurate to five decimal places) of $t(\tau)$ in Table \ref{CMpointsandttau}. According to Theorem \ref{boundoft}, all values in Table \ref{CMpointsandttau} are in fact approximations of algebraic numbers with degrees $\leqslant 4$.

\begin{table}[ht]
	\centering
	{\tiny
		\begin{tabular}{|cc|cc|cc|}
			\hline
			$\tau$  & $t(\tau)$ & $\tau$  & $t(\tau)$  & $\tau$ & $t(\tau)$ \\[0pt]
			\hline
			$\mathit{[1,-1,1]}$ & $\mathit{-216.00000}$ & $[2,0,1]$ & $100.64395$ & $[9,2,1]$ & $25.99494\!+\!1.63455i$ \\[1pt]
			$[1,-1,2]$ & $-4056.94536$ & $[3,2,1]$ & $4.00000\!+\!14.14213i$ & $[9,-2,1]$ & $25.99494\!-\!1.63455i$ \\[1pt]
			$[2,1,1]$ & $15.01873\!+\!62.96451i$ & $[3,-2,1]$ & $4.00000\!-\!14.14213i$ & $[3,2,3]$ & $-171.99494\!+\!323.63455i$ \\[1pt]
			$[2,-1,1]$ & $15.01873\!-\!62.96451i$ & $\mathit{[1,0,3]}$ & $\mathit{53267.29623}$ & $[3,-2,3]$ & $-171.99494\!-\!323.63455i$ \\[1pt]
			$[1,-1,3]$ & $-33491.14467$ & $\mathit{[4,2,1]}$ & $\mathit{15.35188\!+\!11.56864i}$ & $[9,0,1]$ & $28.39230$ \\[1pt]
			$[3,1,1]$ & $32.00000\!+\!26.53299i$ & $\mathit{[4,-2,1]}$ & $\mathit{15.35188\!-\!11.56864i}$ & $\mathit{[12,0,1]}$ & $\mathit{27.51942}$ \\[1pt]
			$[3,-1,1]$ & $32.00000\!-\!26.53299i$ & $\mathit{[3,0,1]}$ & $\mathit{54.00000}$ & $\mathit{[3,0,4]}$ & $\mathit{1430.48057}$ \\[1pt]
			$[1,-1,5]$ & $-885464.77774$ & $[1,0,4]$ & $286766.31332$ & $[15,0,1]$ & $27.21952$ \\[1pt]
			$[5,1,1]$ & $30.24531\!+\!7.39216i$ & $[4,0,1]$ & $40.31662$ & $[3,0,5]$ & $3347.78047$ \\[1pt]
			$[5,-1,1]$ & $30.24531\!-\!7.39216i$ & $[5,2,1]$ & $20.68502\!+\!7.84745i$ & $[18,0,1]$ & $27.10102$ \\[1pt]
			$\mathit{[1,-1,7]}$ & $\mathit{-12288728.98398}$ & $[5,-2,1]$ & $20.68502\!-\!7.84745i$ & $[9,0,2]$ & $36.89897$ \\[1pt]
			$\mathit{[7,1,1]}$ & $\mathit{28.49199\!+\!2.87797i}$ & $[1,0,7]$ & $16580645.98812$ & $[3,-3,2]$ & $-43.68691$ \\[1pt]
			$\mathit{[7,-1,1]}$ & $\mathit{28.49199\!-\!2.87797i}$ & $[7,0,1]$ & $29.99744$ & $[3,-3,5]$ & $-1754.59081$ \\[1pt]
			$[1,-1,11]$ & $-884736728.99977$ & $[8,2,1]$ & $25.50721\!+\!2.35941i$ & $\mathit{[3,-3,7]}$ & $\mathit{-8666.91614}$ \\[1pt]
			$[11,1,1]$ & $27.37486\!+\!0.66585i$ & $[8,-2,1]$ & $25.50721\!-\!2.35941i$ & $[3,-3,11]$ & $-110618.99341$ \\[1pt]
			$[11,-1,1]$ & $27.37486\!-\!0.66585i$ & $[9,1,1]$ & $27.72446\!+\!1.31876i$ & $\mathit{[3,-3,13]}$ & $\mathit{-326618.99776}$ \\[1pt]
			$[1,-1,17]$ & $-147197952728.99999$ & $[9,-1,1]$ & $27.72446\!-\!1.31876i$ & $[3,-3,23]$ & $-27000026.99997$ \\[1pt]
			$[17,1,1]$ & $27.06887\!+\!0.11984i$ & $[3,1,3]$ & $260.27553\!+\!424.63897i$ & $[9,-9,5]$ & $-18.97825$ \\[1pt]
			$[17,-1,1]$ & $27.06887\!-\!0.11984i$ & $[3,-1,3]$ & $260.27553\!-\!424.63897i$ & $\mathit{[27,0,1]}$ & $\mathit{27.01369}$ \\[1pt]
			$[1,-1,41]$ & $-262537412640768728.99999$ & $[6,2,1]$ & $23.30495\!+\!5.19349i$ & $\mathit{[9,6,4]}$ & $\mathit{-4.50684\!+\!31.29187i}$ \\[1pt]
			$[41,1,1]$ & $27.00056\!+\!0.00098i$ & $[6,-2,1]$ & $23.30495\!-\!5.19349i$ & $\mathit{[9,-6,4]}$ & $\mathit{-4.50684\!-\!31.29187i}$ \\[1pt]
			$[41,-1,1]$ & $27.00056\!-\!0.00098i$ & $[3,2,2]$ & $-39.30495\!+\!93.19349i$ & $\mathit{[9,3,7]}$ & $\mathit{130.50002\!+\!199.64657i}$ \\[1pt]
			$[1,0,1]$ & $550.59223$ & $[3,-2,2]$ & $-39.30495\!-\!93.19349i$ & $\mathit{[9,-3,7]}$ & $\mathit{130.50002\!-\!199.64657i}$ \\[1pt]
			$[2,2,1]$ & $-10.59223$ & $[6,0,1]$ & $31.63246$ &  & \\[1pt]
			$[1,0,2]$ & $7243.35604$ & $[3,0,2]$ & $184.36753$ &  & \\[1pt]
			\hline
		\end{tabular}
	}
	\smallskip
	\caption{CM points and the numerical approximations of $t(\tau)$.}
	\label{CMpointsandttau}
\end{table}

In general, each CM point $\tau_0$ in Table \ref{CMpointsandttau} will produce an identity of the form
\[m_3(t(\tau_0))=\frac{c_{\tau_0}}{\pi^2}L(f_{\tau_0},2),\]
with $f_{\tau_0}$ a normalized cusp form and $c_{\tau_0}$ a real quadratic number such that $c_{\tau_0}^2\in\Q$. To limit the length of this paper, we only deal with those points that make $c_{\tau_0}\in\Q$, and they are italicized in Table \ref{CMpointsandttau}. Note that $[1,-1,1]=\frac{1+i\sqrt{3}}{2},[3,0,1]=\frac{i\sqrt{3}}{3}$ and $[27,0,1]=\frac{i\sqrt{3}}{9}$ correspond to the already proven results \cite{RV98, Rog11, Sam14} for $m_3(-216),m_3(54)$ and $m_3\bigl(6-6\sqrt[3]{2}+18\sqrt[3]{4}\bigr)$, respectively.

\section{The relations with cusp forms}\label{TRWCF}

In order to recognize the lattice sums appearing in \eqref{Villegasformula} as $L$-values of modular forms, we need some facts about theta functions associated to lattices. Recall that an \emph{even} lattice $L$ of rank $n$ is the submodule $\Z^n$ of $\R^n$ equipped with a nondegenerate quadratic form
\[\mathcal{Q}(X)=\frac{1}{2}X^tAX,\quad X\in\Z^n,\]
where $A$ is an \emph{even} matrix of rank $n$, that is, $A$ is an $n\times n$ symmetric matrix with integer entries and even integer diagonals. Obviously, this definition makes $\mathcal{Q}(X)\in\Z$ for every $X\in\Z^n$. The \emph{level} of $L$ is defined to be the smallest positive integer $N$ such that $NA^{-1}$ is an even matrix. We also define $L^*=\{Y\in\R^n\mid X^{t}AY\in\Z,\forall X\in\Z^n\}$ to be the \emph{dual lattice} of $L$.

\begin{pro}[{\cite[Corollary 14.3.16]{CS17}}]\label{thetafunction}
	Let $L$ be a positive definite even lattice of \emph{even} rank $n$, level $N$, and quadratic form $\mathcal{Q}(X)=\frac{1}{2}X^tAX$. Assume that $A^{-1}=(b_{i,j})_{1\leqslant i,j\leqslant n}$ and let $P(x_1,\cdots,x_n)$ be a homogeneous polynomial of degree $(k-n)/2$ such that $\Delta_\mathcal{Q}(P)=0$, where
	\[\Delta_\mathcal{Q}=\sum_{1\leqslant i,j\leqslant n}b_{i,j}\frac{\partial^2}{\partial x_i\partial x_j}.\]
	Then for all $Y\in L^*$, the \emph{theta function}
	\[\Theta(P,L,Y;\tau):=\sum_{X\in\Z^n}P(X+Y)q^{\mathcal{Q}(X+Y)}\]
	is in $\mathcal{M}_{k/2}(\Gamma(N))$. In addition, if $k>n$, then $\Theta$ is also a cusp form.
\end{pro}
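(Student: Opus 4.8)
The plan is to realize the stated assertion as an instance of the general transformation theory for theta series with spherical (harmonic) polynomial coefficients attached to positive definite quadratic forms, as developed in the classical works of Schoeneberg, Pfetzer and, in the modern normalization, in \cite{CS17}. First I would record the exact hypotheses that feed into that theory: the lattice $L = \Z^n$ carries the even quadratic form $\mathcal{Q}(X) = \tfrac12 X^t A X$ with $A$ even, positive definite, of even rank $n$ and level $N$; the polynomial $P$ is homogeneous of degree $(k-n)/2 =: \nu$ and satisfies $\Delta_{\mathcal{Q}}(P) = 0$, i.e. $P$ is \emph{harmonic} with respect to the dual form, which is exactly the condition that makes $\Theta(P,L,Y;\tau)$ transform like a modular form of weight $n/2 + \nu = k/2$ rather than merely satisfying a heat-type functional equation. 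For $Y \in L^*$ one then invokes Poisson summation on the coset $Y + \Z^n$: the Fourier transform of $P(x+Y)e^{-\pi t \,{}^t(x+Y)A(x+Y)}$ (with $\tau = it$) is computed explicitly using the harmonicity of $P$, yielding a clean functional equation under $\tau \mapsto -1/(N\tau)$ that involves the dual lattice $L^* = A^{-1}\Z^n$ and the reciprocal Gram matrix $NA^{-1}$, which is even precisely because $N$ is the level. Combined with the obvious invariance under $\tau \mapsto \tau + 1$ (here one uses $\mathcal{Q}(X+Y) \in \tfrac12\Z$ and keeps careful track of the multiplier), these two transformations generate the action of $\Gamma(N)$, and one concludes $\Theta(P,L,Y;\tau) \in \mathcal{M}_{k/2}(\Gamma(N))$.

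The second half of the statement — that $\Theta$ is a cusp form when $k > n$, equivalently $\nu = \deg P \geqslant 1$ — I would handle by examining the constant terms of $\Theta$ at all cusps. Since $\Gamma(N)$ is normal in $\SL(2,\Z)$, every cusp is $\SL(2,\Z)$-equivalent to $\infty$, and the transform of $\Theta$ under any $\gamma \in \SL(2,\Z)$ is again (up to the automorphy factor and a root of unity) a finite linear combination of theta series $\Theta(P,L,Y';\tau)$ over representatives $Y'$ of $L^*/L$. So it suffices to show that each such $\Theta(P,L,Y';\tau) = \sum_{X \in \Z^n} P(X+Y')q^{\mathcal{Q}(X+Y')}$ has vanishing constant term, i.e. that the contribution from the minimal value of $\mathcal{Q}$ on the coset $Y' + \Z^n$ vanishes. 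If that minimum is attained at a nonzero set of vectors the associated $q$-power is positive and there is nothing to check; the only danger is the coset $Y' \in L$ (so $Y' \equiv 0$), where $X = 0$ contributes $P(0)q^0$. But $\deg P = \nu \geqslant 1$ forces $P(0) = 0$ by homogeneity, so the constant term vanishes there too. Hence all constant terms at all cusps vanish and $\Theta$ is a cusp form.

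The step I expect to be the genuine crux is the Poisson-summation computation of the Fourier transform of the weighted harmonic polynomial — i.e. establishing that
\[
\widehat{\,P(\cdot + Y)\,e^{-\pi\, {}^t(\cdot+Y)\,(tA)\,(\cdot+Y)}\,}(\xi) \;=\; (\text{explicit constant}) \cdot t^{-n/2-\nu}\, P^*(\xi)\, e^{-\pi\, {}^t\xi\,(t^{-1}A^{-1})\,\xi + 2\pi i\, {}^tY\xi}
\]
with $P^*$ again harmonic of the same degree (in fact $P^* = P$ after the natural change of variables, since harmonicity is preserved). This is where the hypothesis $\Delta_{\mathcal{Q}}(P) = 0$ is used decisively: for a general $P$ the Fourier transform would involve a finite sum of lower-degree harmonic pieces and the transformation law would fail to be that of a single modular form. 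Fortunately this is precisely \cite[Corollary 14.3.16]{CS17} (building on the Hecke–Schoeneberg lemma on harmonic theta series), so in the paper I would simply cite it; were one to prove it from scratch, verifying this transform, together with the book-keeping of the automorphy multiplier under the standard generators of $\SL(2,\Z)$ and its restriction to $\Gamma(N)$, is the only non-formal part of the argument. Everything else — evenness of $NA^{-1}$, integrality of $\mathcal{Q}(X+Y)+\mathbb{Z}$, and the cusp analysis above — is routine once that functional equation is in hand.
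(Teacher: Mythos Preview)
The paper does not supply its own proof of this proposition: it is quoted verbatim as \cite[Corollary 14.3.16]{CS17} and used as an off-the-shelf input in Section~\ref{TRWCF}, so there is no in-paper argument to compare your sketch against. Your outline is a faithful summary of the classical Schoeneberg--Hecke approach that underlies that corollary; the one point I would tighten is the claim that invariance under $\tau\mapsto\tau+1$ together with the Poisson/Fricke relation under $\tau\mapsto -1/(N\tau)$ already ``generate the action of $\Gamma(N)$'' --- they do not, and the standard proof instead establishes the full $\SL(2,\Z)$ transformation law (with multiplier and coset permutation on $L^*/L$) and then restricts, exactly as in the reference you cite.
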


It is clear from the definition that $\Theta(P,L,Y;\tau)$ only depends on the class of $Y$ in $L^*/\Z^n$. Let $k$ be an integer. Recall that the weight $k$ slash operator $\big{|}_k$ is given by
\[(f\big{|}_k\gamma)(\tau):=(c\tau+d)^{-k}f(\gamma\tau),\quad\forall\gamma=\begin{pmatrix}a & b\\ c & d\end{pmatrix}\in\SL(2,\Z)\text{ and }f:\mathcal{H}\to\C.\]
For elements in $\Gamma_0(N)$, we also have the following transformation
formula.

\begin{pro}[{\cite[Chapter IX, \S 4, Theorem 5]{Sch74}}]\label{transformationformula}
	Let the assumptions of Proposition \ref{thetafunction} hold. For $\gamma=\begin{pmatrix}a & b\\ c & d\end{pmatrix}\in \Gamma_0(N)$, we have
	\[\bigl(\Theta(P,L,Y;\cdot)\big{|}_{k/2}\gamma\bigr)(\tau)=v(d)e^{2\pi iab\mathcal{Q}(Y)}\Theta(P,L,aY;\tau),\]
	where $v(d)=\left(\frac{(-1)^{n/2}\det{A}}{d}\right)$ if $d>0$, and $v(d)=(-1)^{n/2}\left(\frac{(-1)^{n/2}\det{A}}{-d}\right)$ if $d<0$.
\end{pro}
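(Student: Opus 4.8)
\emph{Proof proposal.} The plan is to establish the identity first for the generator $T$, then set up the Poisson-summation machinery on the special case $\tau\mapsto-1/\tau$, and finally run the same argument for an arbitrary $\gamma=\begin{pmatrix}a&b\\c&d\end{pmatrix}\in\Gamma_{0}(N)$ by peeling off the cusp $a/c$; the divisibility $N\mid c$ is exactly what forces the answer to close up into a single theta series for $L$ with the stated characteristic and multiplier. The $T$-step is immediate: for $Y\in L^{*}$ and $X\in\Z^{n}$ we have $\mathcal{Q}(X+Y)=\mathcal{Q}(X)+X^{t}AY+\mathcal{Q}(Y)\equiv\mathcal{Q}(Y)\pmod{\Z}$ (since $A$ is even and $X^{t}AY\in\Z$), so $\tau\mapsto\tau+1$ multiplies every term of the series by the constant $e^{2\pi i\mathcal{Q}(Y)}$, giving $\Theta(P,L,Y;\cdot)\big|_{k/2}T=e^{2\pi i\mathcal{Q}(Y)}\Theta(P,L,Y;\cdot)$; this is the asserted formula at $\gamma=T$, where $a=b=d=1$ and $v(1)=1$. (The case $c=0$, i.e. $\gamma=\pm T^{b}$, then follows together with the trivial action of $-I$.)

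For $\tau\mapsto-1/\tau$ one uses Poisson summation. Writing $q^{\mathcal{Q}(\xi)}=e^{\pi i\tau\,\xi^{t}A\xi}$ and taking $\tau$ purely imaginary, apply Poisson summation over $\Z^{n}$ to $x\mapsto P(x+Y)e^{\pi i\tau(x+Y)^{t}A(x+Y)}$. The hypothesis $\Delta_{\mathcal{Q}}P=0$ is exactly what makes the Gaussian-weighted Fourier transform of $P$ reproduce the same data with $A$ replaced by $A^{-1}$ and $\tau$ by $-1/\tau$, up to the scalar $i^{-(k-n)/2}(\det A)^{-1/2}$ (Hecke's trick, after diagonalising the positive definite $A$); after the substitution $V=A^{-1}W$ on the dual side the transformed sum runs over $L^{*}$, and grouped modulo $\Z^{n}$ it stays inside the finite span of $\{\Theta(P,L,Z;\cdot):Z\in L^{*}/\Z^{n}\}$. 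Analytic continuation in $\tau$ then yields a Jacobi-type identity of the shape
\[
\Theta(P,L,Y;-1/\tau)=\frac{\kappa\,(\tau/i)^{k/2}}{\sqrt{\det A}}\sum_{Z\in L^{*}/\Z^{n}}e^{2\pi i\,Z^{t}AY}\,\Theta(P,L,Z;\tau),
\]
where the root of unity $\kappa$ (a power of $i$ depending only on $n$ and $\deg P$) is pinned down by taking $Y=0$, $P=1$ and invoking the Gauss-sum evaluation $\sum_{Z\in L^{*}/\Z^{n}}e^{2\pi i\mathcal{Q}(Z)}=\sqrt{\det A}\,e^{2\pi in/8}$ (Milgram's formula, the signature of $L$ being $n$).

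For a general $\gamma\in\Gamma_{0}(N)$ with $c\neq0$, write $\gamma\tau=\frac{a}{c}-\frac{1}{c(c\tau+d)}$, split the lattice sum $\sum_{X\in\Z^{n}}$ over the residue classes $R\in\Z^{n}/c\Z^{n}$, and note that $e^{2\pi i a\mathcal{Q}(X+Y)/c}$ depends only on $R$. Poisson summation in the remaining variable then produces the automorphy factor $(c\tau+d)^{k/2}$ and, using $N\mid c$ (so that $c$ acts trivially on $L^{*}/\Z^{n}$ and $ad\equiv1\pmod{N}$, hence $a\equiv d^{-1}$ on $L^{*}/\Z^{n}$), recombines the output into a single theta series $\Theta(P,L,aY;\tau)$; the outer sum over $\Z^{n}/c\Z^{n}$ is a multi-dimensional quadratic Gauss sum whose closed form, together with $ad-bc=1$ and $\mathcal{Q}(aY)=a^{2}\mathcal{Q}(Y)$, contributes precisely the factor $v(d)\,e^{2\pi iab\,\mathcal{Q}(Y)}$, with the extra sign $(-1)^{n/2}$ for $d<0$ coming from the branch of $(c\tau+d)^{k/2}$. (Equivalently, one can decompose $\gamma$ into a word in $S$ and $T$ and multiply out the finite-dimensional $\SL(2,\Z)$-representation carried by $\bigl(\Theta(P,L,Y;\cdot)\bigr)_{Y\in L^{*}/\Z^{n}}$.)

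The step I expect to be the main obstacle is the evaluation of this multi-dimensional Gauss sum, and with it the bookkeeping of the roots of unity and the square-root branch needed to consolidate everything into the clean character $v(d)$ --- in particular, getting the sign at $d<0$ right. This is where quadratic Gauss-sum reciprocity (equivalently Milgram's signature formula for the discriminant form $L^{*}/\Z^{n}$) is genuinely needed, and where the evenness of $n$ is essential: it makes $k/2$ an integer and $v$ an honest character of $\Gamma_{0}(N)$ rather than a multiplier system on the metaplectic cover, so no half-integral-weight complications arise. A conceptually cleaner packaging of the whole argument is to identify $\bigl(\Theta(P,L,Y;\cdot)\bigr)_{Y\in L^{*}/\Z^{n}}$ with a vector-valued modular form for the Weil representation $\rho_{L}$ of $\SL(2,\Z)$ and to quote the standard closed description of $\rho_{L}$ on $\Gamma_{0}(N)$; the content is the same, with the reciprocity absorbed into the formula for $\rho_{L}$.
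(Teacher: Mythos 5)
This proposition is imported verbatim from Schoeneberg \cite[\S 4, Theorem 5]{Sch74}; the paper offers no proof of its own, so the only meaningful comparison is with the argument in the cited source, and your outline is essentially that argument: the trivial $T$-computation using $\mathcal{Q}(X+Y)\equiv\mathcal{Q}(Y)\pmod{\Z}$, Poisson summation with the harmonicity hypothesis $\Delta_{\mathcal{Q}}P=0$ to get the $S$-type (Jacobi) identity valued in the span of $\{\Theta(P,L,Z;\cdot):Z\in L^{*}/\Z^{n}\}$, and then the general $\gamma\in\Gamma_0(N)$ handled by splitting the lattice sum over $\Z^{n}/c\Z^{n}$ and using $N\mid c$ (so that $cL^{*}\subseteq\Z^{n}$ and $ad$ acts as the identity on $L^{*}/\Z^{n}$) to recollapse everything onto the single characteristic $aY$. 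All of the structural claims you make along the way check out, including the consistency of the stated formula at $\gamma=-I$ and the normalization of the eighth root of unity via Milgram's formula.

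The reservation is that, as you yourself flag, the write-up stops exactly where the theorem's content lives: the closed-form evaluation of the rank-$n$ quadratic Gauss sum over $\Z^{n}/c\Z^{n}$, which is what produces the character value $v(d)=\left(\frac{(-1)^{n/2}\det A}{d}\right)$, the factor $e^{2\pi iab\mathcal{Q}(Y)}$, and the extra sign $(-1)^{n/2}$ for $d<0$ coming from the branch of $(c\tau+d)^{k/2}$. Everything before that point is routine; the multiplier is the theorem. So as it stands this is a correct and well-aimed plan (and the Weil-representation packaging you mention at the end is a legitimate way to outsource the reciprocity step), but not yet a proof: to complete it you must either carry out the Gauss-sum evaluation with the sign bookkeeping, or explicitly reduce to a quotable form of Milgram/reciprocity for the discriminant form $(L^{*}/\Z^{n},\mathcal{Q})$ and verify that the resulting root of unity is exactly $v(d)$ in both sign cases of $d$.
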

With these tools in hand, we can now prove the identities in Theorem \ref{mainresult}. We first deal with the cases when the class numbers are equal to $1$.

\subsection{Proofs of the cases when \texorpdfstring{$h(\tau)=1$}{h(tau)=1}}

\begin{proof}[Proof of \eqref{iden4}]
	Since $17766+14094\sqrt[3]{2}+11178\sqrt[3]{4}=53267.29623\cdots$, by examining Table \ref{CMpointsandttau}, we observe that $\tau_0=[1,0,3]=i\sqrt{3}$ seems to be the candidate such that
	\begin{equation}\label{tauforiden4}
		t(\tau_0)=17766+14094\sqrt[3]{2}+11178\sqrt[3]{4}.
	\end{equation}
	This can be rigorously proved by using the identity \cite[\S 2]{Sam14}
	\begin{equation}\label{Samartformula}
		j(\tau)=\frac{t(\tau)(t(\tau)+216)^3}{(t(\tau)-27)^3}
	\end{equation}
	together with the fact that $j(i\sqrt{3})$ is a rational integer because $h(i\sqrt{3})=h(-12)=1$. A numerical calculation shows that $j(i\sqrt{3})=54000$, and thus \eqref{tauforiden4} is confirmed. It turns out that $\sqrt[3]{t(i\sqrt{3})}\notin\mathcal{K}_Q^\circ$ since $\mathcal{K}_Q\cap\R=[-1,3]$. Taking $\tau=\tau_0$ in Theorem \ref{Villegasthm} then yields
	\begin{equation*}
		\begin{split}
			m_3\bigl(17766+14094\sqrt[3]{2}+11178\sqrt[3]{4}\bigr) & =\frac{243}{4\pi^2}\underset{m,n\in\Z}{{\sum}'}\frac{\chi_{-3}(n)n}{(27m^2+n^2)^2}\\
			& = \frac{243}{2\pi^2}L(F_{108},2),
		\end{split}
	\end{equation*}
	where
	\begin{equation*}
		\begin{split}
			F_{108}(\tau) & =\frac{1}{2}\underset{m,n\in\Z}{{\sum}}\chi_{-3}(n)nq^{27m^2+n^2}\\
			& = \frac{1}{2}\left(\underset{m,n\in\Z}{{\sum}}(3n+1)q^{27m^2+(3n+1)^2}-\underset{m,n\in\Z}{{\sum}}(3n+2)q^{27m^2+(3n+2)^2}\right)\\
			& = \frac{3}{2}\left(\underset{m,n\in\Z}{{\sum}}(n+\tfrac{1}{3})q^{27m^3+9(n+\frac{1}{3})^2}-\underset{m,n\in\Z}{{\sum}}(n+\tfrac{2}{3})q^{27m^3+9(n+\frac{2}{3})^2}\right).
		\end{split}
	\end{equation*}
	Let $L$ be the rank $2$ lattice with quadratic form $\mathcal{Q}(x_1,x_2)=27x_1^2+9x_2^2$. The Gram matrix is $A=\begin{pmatrix}54 & 0\\ 0 & 18\end{pmatrix}$ and thus the level $N=108$. We have
	\[F_{108}(\tau)=\frac{3}{2}\bigl(\Theta(P,L,Y_1;\tau)-\Theta(P,L,Y_2;\tau)\bigr),\]
	where $P(x_1,x_2)=x_2$ and $Y_1=(0,1/3)^t, Y_2=(0,2/3)^t\in L^*$. Proposition \ref{thetafunction} immediately implies that $F_{108}(\tau)\in\mathcal{S}_{2}(\Gamma(108))$. In fact, we can prove that $F_{108}(\tau)\in\mathcal{S}_{2}(\Gamma_0(108))$ by verifying
	\begin{equation}\label{verifymodular}
		(F_{108}\big{|}_2\gamma)(\tau)=F_{108}(\tau),\quad \forall\gamma\in\Gamma_0(108).
	\end{equation}
	Take $\gamma=\begin{pmatrix}a & b\\ c & d\end{pmatrix}\in \Gamma_0(108)$. We have $(a,d)\equiv(\pm1,\pm1)\ \ (\mathrm{mod}\ 6)$ because $ad\equiv 1\ \ (\mathrm{mod}\ 108)$. Also note that $\det{A}=972$ and
	\[\left(\frac{-972}{d}\right)=\begin{cases}
		1, & \text{if }d\equiv 1\ \ (\mathrm{mod}\  6),\\
		-1, & \text{if }d\equiv -1\ \ (\mathrm{mod}\  6),\\
		0, & \text{otherwise}.
	\end{cases}\]
	If $(a,d)\equiv(1,1)\ \ (\mathrm{mod}\ 6)$ and $d>0$, then, according to Proposition \ref{transformationformula}, we have
	\begin{equation*}
		\begin{split}
			(F_{108}\big{|}_2\gamma)(\tau) & = \frac{3}{2}\bigl(\bigl(\Theta(P,L,Y_1;\cdot)\big{|}_2\gamma\bigr)(\tau)-\bigl(\Theta(P,L,Y_2;\cdot)\big{|}_2\gamma\bigr)(\tau)\bigr) \\[1pt]
			& = \frac{3}{2}\left(\frac{-972}{d}\right)\bigl(e^{2\pi i ab}\Theta(P,L,aY_1;\tau)-e^{8\pi i ab}\Theta(P,L,aY_2;\tau)\bigr)\\[1pt]
			& = \frac{3}{2}\bigl(\Theta(P,L,aY_1;\tau)-\Theta(P,L,aY_2;\tau)\bigr)\\[1pt]
			& = \frac{3}{2}\bigl(\Theta(P,L,Y_1;\tau)-\Theta(P,L,Y_2;\tau)\bigr)\\[1pt]
			& = F_{108}(\tau).
		\end{split}
	\end{equation*}
	The fourth equality holds because $Y_1-aY_1, Y_2-aY_2 \in \Z^2$. If $(a,d)\equiv(-1,-1)\ \ (\mathrm{mod}\ 6)$ and $d>0$, we have $Y_2-aY_1,Y_1-aY_2\in\Z^2$, thus
	\begin{equation*}
		\begin{split}
			(F_{108}\big{|}_2\gamma)(\tau) & = -\frac{3}{2}\bigl(\Theta(P,L,aY_1;\tau)-\Theta(P,L,aY_2;\tau)\bigr)\\[1pt]
			& = -\frac{3}{2}\bigl(\Theta(P,L,Y_2;\tau)-\Theta(P,L,Y_1;\tau)\bigr)\\[1pt]
			& = F_{108}(\tau).
		\end{split}
	\end{equation*}
	In the cases when $(a,d)\equiv(\pm1,\pm1)\ \ (\mathrm{mod}\ 6)$ and $d<0$, \eqref{verifymodular} also holds because
	\[-\left(\frac{-972}{-d}\right)=\begin{cases}
		1, & \text{if }d\equiv 1\ \ (\mathrm{mod}\ 6),\\
		-1, & \text{if }d\equiv -1\ \ (\mathrm{mod}\ 6).
	\end{cases}\]
	This completes our verification of \eqref{verifymodular}.

	To prove the identities $m_3(\alpha\pm i\beta)\!=\!\frac{486}{\pi^2}L(\widetilde{F}_{108},2)$, we calculate that
	\[\alpha=15.35188\cdots,\quad\beta=11.56864\cdots.\]
	This time, according to Table \ref{CMpointsandttau}, we need to prove that
	\[t([4,\pm2,1])=t\biggl(\frac{\mp1+i\sqrt{3}}{4}\biggr)=\alpha\pm i\beta.\]
	Indeed, this can be confirmed by the facts that $h([4,\pm2,1])=h(-12)=1$ and $j([4,\pm2,1])=54000$. It follows from Theorem \ref{Villegasthm} that
	\begin{equation*}
		\begin{split}
			m_3(\alpha\pm i\beta) & =\frac{243}{4\pi^2}\underset{m,n\in\Z}{{\sum}'}\frac{\chi_{-3}(n)(3m+4n)}{(9m^2+6mn+4n^2)^2}\\
			& = \frac{486}{\pi^2}L(\widetilde{F}_{108},2),
		\end{split}
	\end{equation*}
	where
	\begin{equation*}
		\begin{split}
			\widetilde{F}_{108}(\tau) & =\frac{1}{8}\sum_{m,n\in\Z}\chi_{-3}(n)(3m+4n)q^{9m^2+6mn+4n^2}\\
			& = \frac{3}{8}\left(\underset{m,n\in\Z}{{\sum}}(m+4(n+\tfrac{1}{3}))q^{9m^3+18m(n+\frac{1}{3})+36(n+\frac{1}{3})^2}\right.\\
			&\quad\quad\quad\quad\quad\quad\quad\quad\quad\quad\quad\left.-\underset{m,n\in\Z}{{\sum}}(m+4(n+\tfrac{2}{3}))q^{9m^3+18m(n+\frac{2}{3})+36(n+\frac{2}{3})^2}\right).
		\end{split}
	\end{equation*}
	Similarly, we can write $\widetilde{F}_{108}(\tau)$ as $\frac{3}{8}\bigl(\Theta(P_1,L_1,Y_1;\tau)-\Theta(P_1,L_1,Y_2;\tau)\bigr)$ with $L_1$ the level $108$ lattice associated to $\mathcal{Q}_1(x_1,x_2)=9x_1^2+18x_1x_2+36x_2^2$ and $P_1(x_1,x_2)=x_1+4x_2$. A completely parallel argument can be made to show that $\widetilde{F}_{108}(\tau)\in\mathcal{S}_{2}(\Gamma_0(108)).$
\end{proof}

Now, we can provide a proof for Samart's conjectural identities \eqref{Samartconj1} and \eqref{Samartconj2}.

\begin{proof}[Proof of Corollary \ref{Samartconj}]
	According to the functional equation
	\[\biggl(\frac{\sqrt{N}}{2\pi}\biggr)^s\Gamma(s)L(f,s)=\pm\biggl(\frac{\sqrt{N}}{2\pi}\biggr)^{2-s}\Gamma(2-s)L(f,2-s)\]
	for $L$-functions of newforms of weight $2$ and level $N$, we have
	\[L'(f_{108},0)=\frac{27}{\pi^2}L(f_{108},2),\quad L'(f_{36},0)=\frac{9}{\pi^2}L(f_{36},2),\quad L'(f_{27},0)=\frac{27}{4\pi^2}L(f_{27},2).\]
	Thus, one can rewrite the right-hand sides of \eqref{Samartconj1} and \eqref{Samartconj2} as
 	\begin{equation*}
		\begin{split}
			\frac{3}{2}(L'(f_{108},0)+3L'(f_{36},0)+3L'(f_{27},0)) & =\frac{81}{2\pi^2}L(f_{108},2)+\frac{81}{2\pi^2}L(f_{36},2)+\frac{243}{8\pi^2}L(f_{27},2)\\
			& = \frac{243}{2\pi^2}\biggl(\frac{1}{3}L(f_{108},2)+\frac{1}{3}L(f_{36},2)+\frac{1}{4}L(f_{27},2)\biggr),\\
			\frac{3}{2}(L'(f_{108},0)+3L'(f_{36},0)-6L'(f_{27},0)) & =\frac{81}{2\pi^2}L(f_{108},2)+\frac{81}{2\pi^2}L(f_{36},2)-\frac{243}{4\pi^2}L(f_{27},2)\\
			& = \frac{486}{\pi^2}\biggl(\frac{1}{12}L(f_{108},2)+\frac{1}{12}L(f_{36},2)-\frac{1}{8}L(f_{27},2)\biggr).
		\end{split}
	\end{equation*}
	By \eqref{iden4}, it is hence enough to prove that
	\begin{equation*}
		\begin{split}
			\frac{1}{3}L(f_{108},2)+\frac{1}{3}L(f_{36},2)+\frac{1}{4}L(f_{27},2) & = L(F_{108},2), \\
			\frac{1}{12}L(f_{108},2)+\frac{1}{12}L(f_{36},2)-\frac{1}{8}L(f_{27},2) & = L(\widetilde{F}_{108},2).
		\end{split}
	\end{equation*}
	For this, we can first write $F_{108}(\tau)$ and $\widetilde{F}_{108}(\tau)$ as
	\begin{equation*}
		\begin{split}
			F_{108}(\tau) & = \frac{1}{3}f_{108}(\tau)+\frac{1}{3}f_{36}(\tau)+\frac{1}{3}f_{27}(\tau)-\frac{4}{3}f_{27}(4\tau),\\
			\widetilde{F}_{108}(\tau) & = \frac{1}{12}f_{108}(\tau)+\frac{1}{12}f_{36}(\tau)-\frac{1}{6}f_{27}(\tau)+\frac{2}{3}f_{27}(4\tau) 
		\end{split}
	\end{equation*}
	by using the fact that the Sturm bound for $\mathcal{M}_2(\Gamma_0(108))$ is $36$. Then
	\begin{equation*}
		\begin{split}
			L(F_{108},2) & = \frac{1}{3}L(f_{108},2)+\frac{1}{3}L(f_{36},2)+\frac{1}{3}L(f_{27},2)-\frac{4}{3}\cdot\frac{1}{4^2}L(f_{27},2) \\
			& = \frac{1}{3}L(f_{108},2)+\frac{1}{3}L(f_{36},2)+\frac{1}{4}L(f_{27},2),\\
			L(\widetilde{F}_{108},2) & = \frac{1}{12}L(f_{108},2)+\frac{1}{12}L(f_{36},2)-\frac{1}{6}L(f_{27},2)+\frac{2}{3}\cdot\frac{1}{4^2}L(f_{27},2) \\
			& = \frac{1}{12}L(f_{108},2)+\frac{1}{12}L(f_{36},2)-\frac{1}{8}L(f_{27},2),
		\end{split}
	\end{equation*}
	as desired.
\end{proof}

In the proofs of the remaining identities of Theorem \ref{mainresult}, we will give the CM points directly and omit the constructions of theta functions. They are all similar and not hard to do.

\begin{proof}[Proof of \eqref{iden5}]
	We just need to show that
	\begin{equation*}
		t([1,-1,7]) = t\biggl(\frac{1+3i\sqrt{3}}{2}\biggr)=-216(18964+13149\sqrt[3]{3}+9117\sqrt[3]{9}),
	\end{equation*}
	\begin{equation*}
		t([7,\pm1,1])  = t\biggl(\frac{\mp1+3i\sqrt{3}}{14}\biggr)=-108\bigl(37928\!-\!13149 \sqrt[3]{3} (1\!\pm\! i \sqrt{3})\!-\!9117\sqrt[3]{9} (1\!\mp\! i \sqrt{3})\bigr).
	\end{equation*}
	These equalities are all the consequences of $h([1,-1,7])=h([7,\pm1,1])=h(-27)=1$,
	\[j([1,-1,7])=j([7,\pm1,1])=-12288000,\]
	and \eqref{Samartformula}.
\end{proof}

For CM points with class numbers greater than $1$, the values of $j(\tau)$ are no longer rational integers. However, according to the theory of complex multiplication mentioned in Section \ref{CPATA}, these values are always determinable.

\subsection{Proofs of the cases when \texorpdfstring{$h(\tau)=2,3$}{h(tau)=2,3}}

\begin{proof}[Proofs of \eqref{iden1}, \eqref{iden2} and \eqref{iden3}]
	We can prove these identities by establishing the following equalities:
	\begin{equation}\label{t3-37}
		t([3,-3,7]) = t\biggl(\frac{3+5i\sqrt{3}}{6}\biggr)=-4320-1944\sqrt{5},
	\end{equation}
	\begin{equation*}
		t([3,-3,13]) = t\biggl(\frac{3+7i\sqrt{3}}{6}\biggr)=-163296-35640\sqrt{21},
	\end{equation*}
	\begin{equation*}
		t([3,0,4]) = t\biggl(\frac{2i\sqrt{3}}{3}\biggr)=729+405\sqrt{3},\quad t([12,0,1]) = t\biggl(\frac{i\sqrt{3}}{6}\biggr)=729-405\sqrt{3}.
	\end{equation*}
	Since $h([3,-3,7])=h([3,-3,13])=h([3,0,4])=h([12,0,1])=2$, the values of $j(\tau)$ at these points are all algebraic integers of degree $2$. In fact, we have
	\begin{equation}\label{j3-37}
		j([3,-3,7])=-327201914880+146329141248\sqrt{5},
	\end{equation}
	\begin{equation*}
		j([3,-3,13])=-17424252776448000+3802283679744000\sqrt{21},
	\end{equation*}
	\begin{equation*}
		j([3,0,4])=1417905000-818626500\sqrt{3},\quad j([12,0,1])=1417905000+818626500\sqrt{3}.
	\end{equation*}
	In the following, we will derive \eqref{t3-37} by proving \eqref{j3-37} and then using \eqref{Samartformula}. The others can be done in the same manner.
	
	Note that the discriminant of $\tau_1=[3,-3,7]$ is $-75$, one can choose $\tau_2$ to be the CM point with discriminant $-75$ that is not in the same $\SL(2,\Z)$-orbit with $\tau_1$, for instance, we choose $\tau_2=[1,-1,19]=\frac{1+5i\sqrt{3}}{2}$. Then $j(\tau_1)$ and $j(\tau_2)$ are all the $2$ different singular moduli of discriminant $-75$, so the polynomial $(X-j(\tau_1))(X-j(\tau_2))$ must be monic and has integer coefficients. By a numerical calculation, we find that this polynomial should be
	\[X^2+654403829760X+5209253090426880.\]
	Immediately, we can confirm \eqref{j3-37} since $j(\tau_1)$ is a root of this polynomial.
\end{proof}

Finally, let us handle the cases when $h(\tau)=3$.

\begin{proof}[Proofs of \eqref{iden6} and \eqref{iden7}]
	This time, the CM points $[9,\pm6,4]$ and $[9,\pm3,7]$ with class numbers $h(-108)=h(-243)=3$ should be employed. We have
	\begin{equation*}
		t([9,\pm6,4]) = t\biggl(\frac{\mp1+i\sqrt{3}}{3}\biggr)=6+3\sqrt[3]{2}-9\sqrt[3]{4}\pm 3i\sqrt{3}(\sqrt[3]{2}+3\sqrt[3]{4}),
	\end{equation*}
	because $j([9,\pm6,4])=\alpha_1\pm i\beta_1$ are the complex roots of
	\[X^3\!-\!151013228706000X^2\!+\!224179462188000000X\!-\!2^{12}\!\cdot\!3^3\!\cdot\!5^9\!\cdot\!11^6\!\cdot\!17^3,\]
	where
	\[\alpha_1=6000(8389623817-3329424418\sqrt[3]{2}-2642565912\sqrt[3]{4}),\]
	\[\beta_1=-12000\sqrt{3}(1664712209\sqrt[3]{2} -1321282956\sqrt[3]{4}).\]
	And we have
	\begin{equation*}
		t([9,\pm3,7]) = t\biggl(\frac{\mp1+3i\sqrt{3}}{6}\biggr)=96-28\sqrt[3]{3}+36\sqrt[3]{9}\pm 4i\sqrt{3}(7\sqrt[3]{3}+9\sqrt[3]{9}),
	\end{equation*}
	because $j([9,\pm3,7])=\alpha_2\pm i\beta_2$ are the complex roots of
	\[X^3\!+\!1855762905734664192000X^2\!-\!2^{30}\!\cdot\!3^3\!\cdot\!5^6\!\cdot\!7\!\cdot\!29\!\cdot\!1097\!\cdot\!37181X\!+\!2^{45}\!\cdot\!3\!\cdot\!5^9\!\cdot\!11^3\!\cdot\!23^3,\]
	where
	\[\alpha_2=-4096000\bigl(151022371885959-52356532113152\sqrt[3]{3}-36301991826555\sqrt[3]{9}\bigr),\]
	\[\beta_2=69632000\sqrt[6]{3}\bigl(6406233851745-3079796006656\sqrt[3]{9}\bigr).\]
	The above equalities can be verified using the same fact as used in the proofs of \eqref{iden6} and \eqref{iden7}. The difference is that algebraic numbers involved here are cubic.
\end{proof}

\section{Linking to the \texorpdfstring{$L$}{L}-values of elliptic curves}\label{LTTLOIEC}

The goal of this section is to prove Theorem \ref{detmahler} and Theorem \ref{modifiedMMasEllipticcurve}. Our process for the former is directed by Beilinson's conjecture. Readers interested in a detailed formulation of Beilinson's conjecture for curves over number fields can consult \cite{DJZ06}.

First, for simplicity, we denote the elliptic curve
\begin{equation*}
	\begin{split}
		C_{\sqrt[3]{729+405\sqrt{3}}}:Y^2 & = X^3-1062882(26+15\sqrt{3}) X^2+18596183472 (23859+13775 \sqrt{3}) X \\
		&\quad -488038239039168(3650401+2107560\sqrt{3})
	\end{split}
\end{equation*}
as $C$ directly. It is defined over $K=\Q(\sqrt{3})$, and the nontrivial element $\sigma\in\Gal(K/\Q)$ converts $C$ to the curve $C^\sigma:=C_{\sqrt[3]{729-405\sqrt{3}}}$. Since $C$ has $j$-invariant $1417905000-818626500\sqrt{3}$, conductor norm $36$ and torsion structure $\Z/2\Z$. One can search in LMFDB \cite{LMFDB} and find that $C$ is isomorphic over $K$ to the elliptic curve with LMFDB label \href{https://www.lmfdb.org/EllipticCurve/2.2.12.1/36.1/a/3}{\texttt{2\!.\!2\!.\!12\!.\!1-36\!.\!1-a3}}. Its $L$-function $L(C,s)$ with label \href{https://www.lmfdb.org/L/4/72e2/1.1/c1e2/0/2}{\texttt{4-72e2-1\!.\!1-c1e2-0-2}} can be written as
\[L(C,s)=L(f_{36},s)L(f_{144},s),\]
where
\begin{equation*}
	\begin{split}
		f_{36}(\tau)&=\eta(6\tau)^4=q-4q^7+2q^{13}+8q^{19}-5q^{25}-4q^{31}-10q^{37}+8q^{43}+9q^{49}+14q^{61}+\cdots, \\
		f_{144}(\tau)&=\frac{\eta(12\tau)^{12}}{\eta(6\tau)^4\eta(24\tau)^4}=q+4q^7+2q^{13}-8q^{19}-5q^{25}+4q^{31}-10q^{37}-8q^{43}+9q^{49}+\cdots.
	\end{split}
\end{equation*}
Moreover, $C$ is a $\Q$-curve, i.e., it is isogenous over $\overline{K}$ to $C^\sigma$, the only Galois conjugate of $C$. In fact, there is an isogeny $\phi:C\to C^\sigma$ defined over $K$ with kernel
{\small
\[\left\{O,\bigl(39366 (362+209\sqrt{3}),0\bigr),\biggl(78732(265+153\sqrt{3}),\pm38263752 \sqrt{3388314+1956244\sqrt{3}}\biggr)\right\}.\]
}

\noindent Thus, we have $L(C^\sigma,s)=L(C,s)$ since two isogenous elliptic curves over a number field have the same $L$-function \cite{Fal83}.
By using Vélu's formula \cite[Theorem 12.16]{Was08}, one can write out this isogeny explicitly:
\[\phi:(X,Y)\mapsto\left(\frac{X\phi_3(X)}{4\phi_1(X)\phi_2(X)^2},\frac{-Y\phi_4(X)}{8\phi_1(X)^2\phi_2(X)^3}\right),\]
where
{\small
\[\phi_1(X)=X-39366\bigl(362+209\sqrt{3}\bigr),\quad\phi_2(X)=X-78732\bigl(265+153\sqrt{3}\bigr),\]
\begin{equation*}
	\begin{split}
		\phi_3(X) & = \bigl(1351-780\sqrt{3}\bigr)X^3+629856\bigl(45-26\sqrt{3}\bigr)X^2+86782189536\bigl(3+\sqrt{3}\bigr)X \\
		& \quad -3904305912313344\bigl(362+209\sqrt{3}\bigr),
	\end{split}
\end{equation*}
\begin{equation*}
	\begin{split}
		\phi_4(X) & = \bigl(70226\!-\!40545\sqrt{3}\bigr)X^5\!+\!78732\bigl(698\!-\!403\sqrt{3}\bigr)X^4\!-\!24794911296\bigl(82\!-\!49\sqrt{3}\bigr)X^3\\
		& \quad -1952152956156672\bigl(425\!+\!246\sqrt{3}\bigr)X^2\!+\!153696906544127099904\bigl(56089\!+\!32383 \sqrt{3}\bigr)X\\
		& \quad -12100864846032214829641728\bigl(2672279+1542841\sqrt{3}\bigr).
	\end{split}
\end{equation*}
}

\noindent We can also obtain an isogeny $\phi^\sigma:C^\sigma\to C$ by applying $\sigma$ to the coefficients of $\phi$. Some (complicated) algebraic calculations imply that
\[\phi\circ\phi^\sigma=[4],\quad\phi^*\omega_{C^\sigma}=-(52+30\sqrt{3})\omega_C,\]
where $\omega_C$ and $\omega_{C^\sigma}$ are the invariant differentials of $C$ and $C^\sigma$ defined by $\frac{dX}{2Y}$, respectively.

Recall that for a pair of meromorphic functions $f,g$ on the Riemann surface $C(\C)$ or $C^\sigma(\C)$, there is a classical differential form
\[\eta(f,g)=\log|f|d\arg g-\log|g|d\arg f.\]
To construct regulator integrals that relate to $m_3(729\pm405\sqrt{3})$ and match them with Beilinson's conjecture, we need to find a pair of rational functions $f,g$ on $C$ that are defined over $K$ (this is for some $K$-theoretical considerations). Since the inverse of \eqref{rationaltransformation} is
\[x=\frac{-18k^2X}{3k^3X+Y-36k^9+972k^6},\quad y=1-\frac{2Y}{3k^3X+Y-36k^9+972k^6},\]
the functions $f=x^3,g=y$ on $C$ are defined over $K$. From now on, we fix this pair of $f,g$.

Next, we construct the integral paths of $\eta(f,g)$ and $\eta(f^\sigma,g^\sigma)$. Let
\[u^\pm(x)=\sqrt[3]{-1-x^3+\sqrt{x^6-(106\pm60\sqrt{3})x^3+1}},\]
where $\sqrt{\cdot}$ and $\sqrt[3]{\cdot}$ are chosen to be the principal branches, i.e.,
\[\sqrt{re^{i\theta}}=\sqrt{r}e^{\frac{i\theta}{2}},\quad\sqrt[3]{re^{i\theta}}=\sqrt[3]{r}e^{\frac{i\theta}{3}},\quad \text{for }r\in\R^{\geqslant 0},\theta\in(-\pi,\pi].\]
We can factorize $x^3+y^3+1-\sqrt[3]{729\pm405\sqrt{3}}xy$ as $(y-y_1^{\pm}(x))(y-y_2^{\pm}(x))(y-y_3^{\pm}(x))$, where
\begin{equation*}
	\begin{split}
		y_1^\pm(x)&=\frac{(3\pm\sqrt{3})x}{u^\pm(x)}+\frac{u^{\pm}(x)}{\sqrt[3]{2}}, \\
		y_2^\pm(x)&=e^{\frac{4\pi i}{3}}\frac{(3\pm\sqrt{3})x}{u^\pm(x)}+e^{\frac{2\pi i}{3}}\frac{u^{\pm}(x)}{\sqrt[3]{2}},\\
		y_3^\pm(x)&=e^{\frac{2\pi i}{3}}\frac{(3\pm\sqrt{3})x}{u^\pm(x)}+e^{\frac{4\pi i}{3}}\frac{u^{\pm}(x)}{\sqrt[3]{2}}.
	\end{split}
\end{equation*}
One can check that
\begin{equation*}
	\begin{split}
		\{\theta\in[-\pi,\pi]\mid |y_1^{\pm}(e^{i\theta})|\geqslant 1\}& =(-2\pi/3,2\pi/3],\\
		\{\theta\in[0,2\pi]\mid |y_2^{\pm}(e^{i\theta})|\geqslant 1\}& =(2\pi/3,2\pi],\\
		\{\theta\in[0,2\pi]\mid |y_3^{\pm}(e^{i\theta})|\geqslant 1\}& =(0,4\pi/3].
	\end{split}
\end{equation*}
Let
\begin{equation*}
	\begin{split}
		\gamma_1^\pm & =\{(e^{i\theta},y_1^\pm(e^{i\theta}))\mid -2\pi/3< \theta\leqslant 2\pi/3\},\\
		\gamma_2^\pm & =\{(e^{i\theta},y_2^\pm(e^{i\theta}))\mid 2\pi/3< \theta\leqslant 2\pi\},\\
		\gamma_3^\pm & =\{(e^{i\theta},y_3^\pm(e^{i\theta}))\mid 0< \theta\leqslant 4\pi/3\}.
	\end{split}
\end{equation*}
Some calculations show that the boundary points of $\gamma_1^\pm,\gamma_2^\pm$ and $\gamma_3^\pm$ are
\[(e^{-\frac{2\pi i}{3}},r^\pm e^{-\frac{\pi i}{3}}),\quad (e^{\frac{2\pi i}{3}},r^\pm e^{\frac{\pi i}{3}}),\quad (1,-r^\pm),\]
where $r^+=\sqrt[3]{20+12\sqrt{3}}, r^-=\sqrt[3]{6\sqrt{3}+\frac{9}{2}(\sqrt{6}-\sqrt{2})-7}$. Thus $\gamma^\pm=\gamma_1^\pm\cup\gamma_2^\pm\cup\gamma_3^\pm$ are continuous closed paths on the Riemann surfaces
\[\left\{(x,y)\in\C^2\ \Big{|}\ x^3+y^3+1-\sqrt[3]{729\pm405\sqrt{3}}xy=0\right\}.\]
The paths of $y_1^\pm(e^{i\theta}),y_2^\pm(e^{i\theta})$ and $y_3^\pm(e^{i\theta})$ are shown in Figure \ref{pathofy_pic} to illustrate this. We define the positive orientations of $\gamma^\pm$ in terms of $e^{i\theta}$ running counterclockwise and regard them also as closed paths on $C(\C)$ and $C^\sigma(\C)$ via \eqref{rationaltransformation}. Since complex conjugation reverses the orientations of $\gamma^{\pm}$, we have $\gamma^+\in H_1(C(\C),\Z)^-$ and $\gamma^-\in H_1(C^\sigma(\C),\Z)^-$.

\begin{figure}[h]
	\centering
	\subfigure{
		\begin{tikzpicture}[scale=0.72]
			\draw[very thin,->] (-4,0) -- (4,0);
			\draw[very thin,->] (0,-4) -- (0,4);
			\draw[dashed] (0,0) circle (1);
			\draw (0:3.26) arc (0:52:3.673);
			\draw (0:3.26) arc (0:-52:3.673);
			\draw[-{Stealth[length=2mm,width=1.2mm]}] (0:3.26) arc (0:20:3.673);
			\draw (120:3.26) arc (120:172:3.673);
			\draw (120:3.26) arc (120:68:3.673);
			\draw[-{Stealth[length=2mm,width=1.2mm]}] (-2.14,2.46) arc (125:126:10);
			\draw (240:3.26) arc (240:292:3.673);
			\draw (240:3.26) arc(240:188:3.673);
			\draw[-{Stealth[length=2mm,width=1.2mm]}] (-2.34,-2.3) arc (230:231:10);
			\draw (-60:3.44) arc (-60:-52:1.35);
			\draw (-60:3.44) arc (-60:-68:1.35);
			\draw (60:3.44) arc (60:68:1.35);
			\draw (60:3.44) arc (60:52:1.35);
			\draw (180:3.44) arc (180:188:1.35);
			\draw (180:3.44) arc (180:172:1.35);
			\fill (60:3.44) circle (2pt);
			\fill (-60:3.44) circle (2pt);
			\fill (180:3.44) circle (2pt);
			\node[right] at (20:3.35) {{\footnotesize$y_1^{+}$}};
			\node[below right] at (130:3.5) {{\footnotesize$y_2^{+}$}};
			\node[above right] at (225:3.45) {{\footnotesize$y_3^{+}$}};
		\end{tikzpicture}
	}\hspace{1.7cm}
	\subfigure{
		\begin{tikzpicture}[scale=1.152]
			\draw[very thin,->] (-2.5,0) -- (2.5,0);
			\draw[very thin,->] (0,-2.5) -- (0,2.5);
			\draw[dashed] (0,0) circle (1);
			\draw (60:2.004) arc (60:-51:1.02);
			\draw (60:2.004) arc (60:171:1.02);
			\draw (-60:2.004) arc (-60:-171:1.02);
			\draw (-60:2.004) arc (-60:51:1.02);
			\draw (180:2.004) arc (180:69:1.02);
			\draw (180:2.004) arc (180:291:1.02);
			\draw[-{Stealth[length=2mm,width=1.2mm]}] (-1.6,-0.815) arc (237:238:10);
			\draw (0:1.1) arc (180:120:0.07);
			\draw (0:1.1) arc (180:240:0.07);
			\draw (120:1.1) arc (-60:0:0.07);
			\draw (120:1.1) arc (-60:-120:0.07);
			\draw (240:1.1) arc (60:0:0.07);
			\draw (240:1.1) arc (60:120:0.07);
			\draw[-{Stealth[length=2mm,width=1.2mm]}] (-1.45,0.905) arc (120:121:10);
			\draw[-{Stealth[length=2mm,width=1.2mm]}] (1.48,0.6) arc (-9:-8:10);
			\fill (60:2.004) circle (1.3pt);
			\fill (-60:2.004) circle (1.3pt);
			\fill (180:2.004) circle (1.3pt);
			\node[right] at (25:1.67) {{\footnotesize$y_1^-$}};
			\node[below right] at (145:2.45) {{\footnotesize$y_2^-$}};
			\node[above right] at (215:2.45) {{\footnotesize$y_3^-$}};
		\end{tikzpicture}
	}
	\caption{The paths of $y^\pm_1(e^{i\theta}),y^\pm_2(e^{i\theta})$ and $y^\pm_3(e^{i\theta})$ on $\C$.}\label{pathofy_pic}
\end{figure}
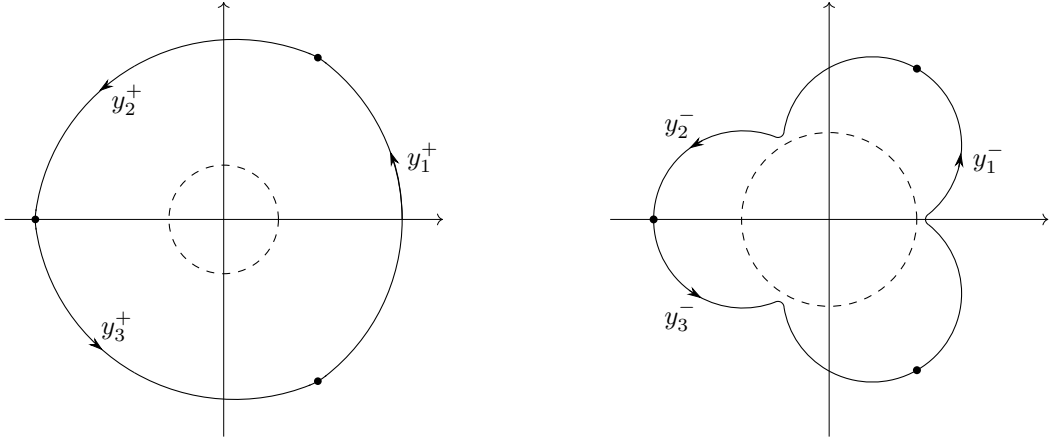

After completing all these preparations, Beilinson's conjecture then predicts that the regulator
\begin{equation}\label{theregulator}
	\left|\det{\begin{pmatrix}\frac{1}{2\pi}\int_{\gamma^+}\eta(f,g) & \frac{1}{2\pi}\int_{\gamma^-}\eta(f^\sigma,g^\sigma)\\
		\frac{1}{2\pi}\int_{\gamma^+}\eta(\phi^*f^\sigma,\phi^*g^\sigma) & \frac{1}{2\pi}\int_{\gamma^-}\eta((\phi^*f^\sigma)^\sigma,(\phi^*g^\sigma)^\sigma)\end{pmatrix}}\right|
\end{equation}
should be some rational multiple of $\frac{1}{\pi^4}L(C,2)$. By Jensen's formula, we can calculate that
\begin{equation*}
	\begin{split}
		\frac{1}{2\pi}\int_{\gamma^+}\eta(f,g) & = \frac{1}{2\pi}\int_{\gamma^+}\eta(x^3,y)\\
		& = \frac{1}{2\pi}\int_{\gamma^+}\log|x^3|\im\left(\frac{dy}{y}\right)-\log|y|\im\left(\frac{dx^3}{x^3}\right)\\
		& = -\frac{3}{2\pi}\int_{\gamma^+}\log|y|\im\left(\frac{dx}{x}\right)\\
		& =-\frac{3}{2\pi}\biggl(\int_{-\frac{2\pi}{3}}^\frac{2\pi}{3}\log|y_1^+(e^{i\theta})|d\theta+\int_{\frac{2\pi}{3}}^{2\pi}\log|y_2^+(e^{i\theta})|d\theta+\int_{0}^\frac{4\pi}{3}\log|y_3^+(e^{i\theta})|d\theta\biggr)\\
		& = -m_3(729+405\sqrt{3}).
	\end{split}
\end{equation*}
Similarly, we have $\frac{1}{2\pi}\int_{\gamma^-}\eta(f^\sigma,g^\sigma) =-m_3(729-405\sqrt{3})$ and
\begin{equation*}
	\begin{split}
		\frac{1}{2\pi}\int_{\gamma^+}\eta(\phi^*f^\sigma,\phi^*g^\sigma) & =\frac{1}{2\pi}\int_{\phi_*\gamma^+}\eta(f^\sigma,g^\sigma),\\
		\frac{1}{2\pi}\int_{\gamma^-}\eta((\phi^*f^\sigma)^\sigma,(\phi^*g^\sigma)^\sigma) & =\frac{1}{2\pi}\int_{\gamma^-}\eta((\phi^\sigma)^*f,(\phi^\sigma)^*g)\\
		& = \frac{1}{2\pi}\int_{(\phi^\sigma)_*\gamma^-}\eta(f,g).
	\end{split}
\end{equation*}
Thus, in order to work out the second row of \eqref{theregulator}, we should determine the pushforward of the paths $\gamma^+$ and $\gamma^-$ by $\phi$ and $\phi^\sigma$.

\begin{lem}
	We have $\phi_*\gamma^+=-\gamma^-\in H_1(C^\sigma(\C),\Z)^-$ and $(\phi^\sigma)_*\gamma^-=-4\gamma^+\in H_1(C(\C),\Z)^-$.
\end{lem}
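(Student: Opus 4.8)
The plan is to track the two components $\gamma^+$ and $\gamma^-$ through the explicit isogeny $\phi$ (and its conjugate $\phi^\sigma$) using a combination of homological bookkeeping and a single numerical spot-check. First I would observe that, since $C$ has only one real place and $H_1(C(\C),\Z)^-$ is rank one, both $\gamma^+$ and $\phi_*\gamma^+$ lie in the same rank-one lattice; hence $\phi_*\gamma^+ = n\gamma^-$ for some integer $n$, and the whole problem reduces to pinning down $n$ (and the analogous integer for $\phi^\sigma$). To get the absolute value of $n$, I would use the relation $\phi^*\omega_{C^\sigma} = -(52+30\sqrt3)\,\omega_C$ already recorded above together with the degree: for any $1$-cycle $\delta$ one has $\int_{\phi_*\delta}\omega_{C^\sigma} = \int_\delta \phi^*\omega_{C^\sigma}$, so comparing periods of $\gamma^+$ against $\omega_C$ and of $\gamma^-$ against $\omega_{C^\sigma}$ fixes $|n|$ once we know the ratio of the two real periods. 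That ratio is itself computable: $\int_{\gamma^+}\omega_C$ and $\int_{\gamma^-}\omega_{C^\sigma}$ are real periods of the curves $C$ and $C^\sigma$, which can be read off from their minimal models (they are Galois-conjugate over $\Q(\sqrt3)$), and combined with $\deg\phi = 4$ and $\phi\circ\phi^\sigma = [4]$ this forces $|n|=1$ for $\phi$ and the multiplicity $4$ for $(\phi^\sigma)_*\gamma^-$.

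The sign is the delicate part. I would fix it by a direct numerical computation: parametrize a convenient sub-arc of $\gamma^+$, say the piece $\{(e^{i\theta}, y_1^+(e^{i\theta})) : -2\pi/3 < \theta \le 2\pi/3\}$, push a sample of points forward through the explicit rational map $\phi:(X,Y)\mapsto\bigl(\tfrac{X\phi_3(X)}{4\phi_1(X)\phi_2(X)^2},\tfrac{-Y\phi_4(X)}{8\phi_1(X)^2\phi_2(X)^3}\bigr)$ after transporting via \eqref{rationaltransformation}, and check whether the image traces $\gamma^-$ with increasing or decreasing orientation parameter. Equivalently, and more robustly, I would evaluate the period integral $\tfrac{1}{2\pi}\int_{\phi_*\gamma^+}\eta(f^\sigma,g^\sigma)$ numerically and compare its sign and magnitude with $-m_3(729-405\sqrt3) = \tfrac{1}{2\pi}\int_{\gamma^-}\eta(f^\sigma,g^\sigma)$; a match of $+1$ versus $-1$ decides the sign unambiguously since these two Mahler measures are nonzero. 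The same computation applied to $\phi^\sigma$ and $\gamma^-$ yields the factor $-4$, consistent with $\deg\phi^\sigma=4$ and the orientation flip already seen for $\phi$ (pushing forward twice gives $(\phi^\sigma)_*\phi_*\gamma^+ = [4]_*\gamma^+ = 4\gamma^+$, which cross-checks $(-4)\cdot(-1)=4$).

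A cleaner way to organize the orientation argument, which I would prefer to present, is to work on the level of the Hesse-model Riemann surfaces rather than the Weierstrass models: $\phi$ and $\phi^\sigma$ correspond, through \eqref{rationaltransformation} and its inverse, to explicit $2$-isogeny-type maps between the curves $\{x^3+y^3+1-\sqrt[3]{729+405\sqrt3}\,xy=0\}$ and $\{x^3+y^3+1-\sqrt[3]{729-405\sqrt3}\,xy=0\}$, and the loops $\gamma^\pm$ are built from the three branches $y_j^\pm$ over the unit circle $|x|=1$. One then checks how $\phi$ permutes the three branches and how it acts on the $x$-coordinate near $|x|=1$; because complex conjugation reverses all the $\gamma^\pm$, the claim $\phi_*\gamma^+=-\gamma^-$ is equivalent to saying $\phi$ sends the ``outside'' branch arcs of $\gamma^+$ to the ``outside'' branch arcs of $\gamma^-$ with a reversal of the circle's orientation, which can be verified branch-by-branch.

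The main obstacle I anticipate is the sign, not the magnitude: the magnitudes $1$ and $4$ follow formally from $\deg\phi=4$, $\phi\circ\phi^\sigma=[4]$, and the period relation $\phi^*\omega_{C^\sigma}=-(52+30\sqrt3)\omega_C$ (note $|{-}(52+30\sqrt3)|\cdot|{-}(52-30\sqrt3)| = 52^2 - 30^2\cdot 3 = 2704 - 2700 = 4$, matching $\deg\phi^\sigma$), but orientations are invisible to these identities and genuinely require either a careful branch analysis on the Hesse curves or a numerical verification. I expect the numerical route to be short and decisive, so in the write-up I would state the homological reduction, quote the period relation to get the multiplicities, and then settle the signs by the explicit branch permutation of $\phi$ over $|x|=1$, leaving the supporting numerics to be understood.
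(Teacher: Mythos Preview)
Your proposal is correct and follows essentially the same route as the paper: reduce to determining integers via the rank-one structure of $H_1(\,\cdot\,,\Z)^-$, then use the pullback relation $\phi^*\omega_{C^\sigma}=-(52+30\sqrt{3})\,\omega_C$ together with $\phi\circ\phi^\sigma=[4]$ and a numerical period comparison. The paper's execution is simply more economical than what you outline: it numerically evaluates $\int_{\gamma^+}\omega_C$ and $\int_{\gamma^-}\omega_{C^\sigma}$, checks against the SageMath period lattices that $\gamma^\pm$ are \emph{generators} of $H_1^-$ (a step you gloss over when writing ``hence $\phi_*\gamma^+=n\gamma^-$ for some integer $n$''), and then the single identity $-(52+30\sqrt{3})\int_{\gamma^+}\omega_C\approx-\int_{\gamma^-}\omega_{C^\sigma}$ delivers $a=-1$ with sign and magnitude simultaneously; $b=-4$ drops out of $ab=4$. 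There is no need to separate sign from magnitude, push sample points through $\phi$, compare regulator integrals, or analyze branch permutations on the Hesse model.
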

\begin{proof}
	Again, we can prove this by ``numerical method''. By \eqref{rationaltransformation}, the invariant differential of $C_k$ can be written as
	\[\frac{dX}{2Y}=\frac{1+y-\frac{x(3 x^2-ky)}{k x-3 y^2}}{6k^2(y-1)(kx+3y+3)}dx.\]
	This enables us to calculate numerically (by using \textsf{Mathematica}) the integrations of $\omega_C$ and $\omega_{C^\sigma}$ along $\gamma^+$ and $\gamma^-$:
	\begin{equation}\label{approx1}
		\int_{\gamma^+}\omega_C\approx 0.000735163130i,\quad \int_{\gamma^-}\omega_{C^\sigma}\approx0.076428679590i.
	\end{equation}
	Comparing these values with the real and complex periods of the lattices corresponding to $C$ and $C^\sigma$ calculated by \textsf{SageMath}, we find that $\gamma^+$ and $\gamma^-$ are in fact generators of $H_1(C(\C),\Z)^-$ and $H_1(C^\sigma(\C),\Z)^-$, respectively. Thus, there must exist integers $a,b\in\Z$ such that $\phi_*\gamma^+=a\gamma^-,(\phi^\sigma)_*\gamma^-=b\gamma^+$. Since $\phi\circ\phi^\sigma=[4],$ we also have $ab=4$. Moreover, one can calculate that
	\begin{equation}\label{approx2}
		\int_{\phi_*\gamma^+}\omega_{C^\sigma}=\int_{\gamma^+}\phi^*\omega_{C^\sigma}=-(52+30\sqrt{3})\int_{\gamma^+}\omega_C\approx-0.076428679590i.
	\end{equation}
	Comparing \eqref{approx1} and \eqref{approx2}, we immediately observe that $a=-1$ and thus $b=-4$.
\end{proof}

According to the above lemma, the regulator \eqref{theregulator} equals
\[\left|\det{\begin{pmatrix}m_3(729+405\sqrt{3})& m_3(729-405\sqrt{3})\\
	m_3(729-405\sqrt{3})&4m_3(729+405\sqrt{3})\end{pmatrix}}\right|\]

\begin{proof}[Proof of Theorem \ref{detmahler}]
	Since the Sturm bound for $\mathcal{M}_2(\Gamma_0(144))$ is $48$, we can prove that
	\[F_{144}(\tau)=\frac{1}{2}f_{36}(\tau)-2f_{36}(4\tau)+\frac{1}{2}f_{144}(\tau),\quad \widetilde{F}_{144}(\tau)=-\frac{1}{4}f_{36}(\tau)+f_{36}(4\tau)+\frac{1}{4}f_{144}(\tau).\]
	And thus
	\[L(F_{144},2)=\frac{3}{8}L(f_{36},2)+\frac{1}{2}L(f_{144},2),\quad L(\widetilde{F}_{144},2)=-\frac{3}{16}L(f_{36},2)+\frac{1}{4}L(f_{144},2).\]
	By \eqref{iden3}, we have
	\begin{equation*}
		\begin{split}
			& 4m_3(729+405\sqrt{3})^2-m_3(729-405\sqrt{3})^2 \\
			=\ & 4\left(\frac{81}{\pi^2}\left(\frac{3}{8}L(f_{36},2)+\frac{1}{2}L(f_{144},2)\right)\right)^2-\left(\frac{324}{\pi^2}\left(-\frac{3}{16}L(f_{36},2)+\frac{1}{4}L(f_{144},2)\right)\right)^2\\
			=\ & \frac{19683}{\pi^4}L(f_{36},2)L(f_{144},2)\\
			=\ & \frac{19683}{\pi^4}L(C,2)\\
			=\ & \frac{243}{8}L''(C,0),
		\end{split}
	\end{equation*}
	where the last equality follows by the  functional equation of $L(C,s)$.
\end{proof}

Finally, let us prove Theorem \ref{modifiedMMasEllipticcurve}. To achieve this, we need the following hypergeometric formula for $\tilde{n}(k)$ proved by Samart.

\begin{thm}[{\cite[Theorem 1]{Sam23}}]\label{Samhypergeometricformula}
	Let $\tilde{n}(k)$ be the modified Mahler measure \eqref{modifiedMM}. Then for $k\in(-1,3)-\{0\}$, the following identity is true:
	\[\tilde{n}(k)=\frac{4}{1-3\sgn(k)}\re\biggl(\log k-\frac{2}{k^3}{}_4 F_3\biggl(\begin{matrix}\frac{4}{3},\ \frac{5}{3},\ 1,\ 1\\[2pt]2,\ 2,\ 2\end{matrix}\ \bigg{|}\ \frac{27}{k^3}\biggr)\biggr).\]
\end{thm}
For $k\in\C-\mathcal{K}_Q^\circ$, we also have \cite[Theorem 3.1]{Rog11}
\begin{equation}\label{Rogformula}
	m(Q_k)=\re\biggl(\log k-\frac{2}{k^3}{}_4 F_3\biggl(\begin{matrix}\frac{4}{3},\ \frac{5}{3},\ 1,\ 1\\[2pt]2,\ 2,\ 2\end{matrix}\ \bigg{|}\ \frac{27}{k^3}\biggr)\biggr).
\end{equation}

\begin{proof}[Proof of Theorem \ref{modifiedMMasEllipticcurve}]
	Since $t(\tau)$ is a Hauptmodul for $\Gamma_0(3)$, the map $\tau\mapsto t(\tau)$ is a biholomorphic mapping form the genus zero Riemann surface $X_0(3)$ to $\mathbb{P}^1(\C)$. Also, it is known from \cite[\S 14]{RV98} that $t\bigl(\pm\frac{1}{2}+\frac{i}{2\sqrt{3}}\bigr)=0.$ Thus, we have
	\[t(\tau)\neq 0,\quad\forall\tau\in\mathcal{F}'-\left\{\pm\frac{1}{2}+\frac{i}{2\sqrt{3}}\right\}.\]
	By Rodriguez Villegas' formula \eqref{Villegasformula} and the above hypergeometric formula \eqref{Rogformula}, the equation
	\[\re\biggl(\frac{\log t(\tau)}{3}\!-\!\frac{2}{t(\tau)}{}_4 F_3\biggl(\begin{matrix}\frac{4}{3},\ \frac{5}{3},\ 1,\ 1\\[2pt]2,\ 2,\ 2\end{matrix}\ \bigg{|}\ \frac{27}{t(\tau)}\biggr)\biggr)=\frac{27\sqrt{3}\im(\tau)}{4\pi^2}\underset{m,n\in\Z}{{\sum}'}\frac{\chi_{-3}(n)(3m\re(\tau)\!+\!n)}{\left|3m\tau\!+\!n\right|^4}\]
	holds on some open set of $\mathcal{F}'$. Since both sides of the above equation are harmonic on $\mathcal{F}'^\circ$, they must coincide for every $\tau\in\mathcal{F}'-\bigl\{\pm\frac{1}{2}+\frac{i}{2\sqrt{3}}\bigr\}$. Hence, according to Theorem \ref{Samhypergeometricformula}, we just proved that for $\tau\in\mathcal{F}'$ with $\sqrt[3]{t(\tau)}\in (-1,3)-\{0\}$, the following formula for $\tilde{n}(k)$ holds:
	\begin{equation}\label{latticesumformodifiedMM}
		\tilde{n}(\sqrt[3]{t(\tau)})=\frac{27\sqrt{3}\im(\tau)}{\bigl(1-3\sgn(\sqrt[3]{t(\tau)})\bigr)\pi^2}\underset{m,n\in\Z}{{\sum}'}\frac{\chi_{-3}(n)(3m\re(\tau)+n)}{\left|3m\tau+n\right|^4}.
	\end{equation}

	Now, we are back to our familiar track. From the outputs of Algorithm \ref{Searchcmpoints}, we find that $t([9,3,1])=t\bigl(-\frac{1}{6}+\frac{i}{2\sqrt{3}}\bigr)=24$. Note that this point is not listed in Table \ref{CMpointsandttau}, because we only listed in Table \ref{CMpointsandttau} those points that make $\sqrt[3]{t(\tau)}\in\C-\mathcal{K}_Q^\circ$. By \eqref{latticesumformodifiedMM}, we immediately obtain that
	\[\tilde{n}(\sqrt[3]{24})=-\frac{81}{4\pi^2}L(f_{27},2)=-3L'(f_{27},0),\]
	where $f_{27}(\tau)=\frac{1}{6}\underset{m,n\in\Z}{{\sum}}\chi_{-3}(n)(m+2n)q^{m^2+mn+n^2}=\eta(3\tau)^2\eta(9\tau)^2$ is the unique normalized cusp form in $\mathcal{M}_2(\Gamma_0(27))$. This is exactly the newform that corresponds to $C_{\sqrt[3]{24}}$ which is isomorphic over $\Q$ to the elliptic curve with LMFDB label \href{https://www.lmfdb.org/EllipticCurve/Q/27/a/1}{\texttt{27\!.\!a1}}.
\end{proof}

\section{Some remarks}

Our approach was originally inspired by the work \cite{HSY18} of Huber, Schultz and Ye on $1/\pi$-series. In fact, Q. He and Ye have already proved in \cite{HY22} all formulas conjectured by Samart in \cite{Sam14} that involve the Mahler measure of the trivariate Laurent polynomial
\[\biggl(x+\frac{1}{x}\biggr)^2\biggl(y+\frac{1}{y}\biggr)^2\frac{(1+z)^3}{z^2}-s.\]
They also suggested that Samart's conjectural identities associated to
\[\biggl(x+\frac{1}{x}\biggr)\biggl(y+\frac{1}{y}\biggr)\biggl(z+\frac{1}{z}\biggr)+\sqrt{s}\quad\text{and}\quad x^4+y^4+z^4+1-\sqrt[4]{s}xyz\]
might be proved using their method. Moreover, in \cite{Fei21}, Fei expressed the Mahler measures of $23$ families of Laurent polynomials in terms of Kronecker-Eisenstein series. There seems to be a huge number of identities that can be proved. Finally, it will be interesting if one could prove an identity that relates a $3\times 3$ or higher order determinant with Mahler measures as entries to the $L$-value of an elliptic curve.

\section*{Acknowledgement}

We would like to thank Detchat Samart for the valuable discussions about his modified Mahler measure $\tilde{n}(k)$. We also express our thanks to Dr. Yu Xu for his helpful comments. Finally, we are grateful to the referees for their comments and suggestions to improve this paper.

\end{document}